\newtheorem{theorem}{Theorem}[section]
\newtheorem{rem}[theorem]{Remark}
\newenvironment{remark}{\begin{rem} \em }{\em \end{rem}}
\newtheorem{proposition}[theorem]{Proposition}
\newtheorem{ex}[theorem]{Example}
\newenvironment{example}{\begin{ex} \em }{\em \end{ex}}
\newtheorem{definition}[theorem]{Definition}
\newtheorem{assumption}[theorem]{Assumption}
\newcommand{\cb}    [1]{\ensuremath{\left  \{      #1  \right \}       }}
\newcommand{\of}    [1]{\ensuremath{\left (        #1  \right )        }}
\newcommand{\st} {\ensuremath{|\;}}
\newcommand{\cl}  {{\rm cl  \,}}
\newcommand{\bd}  {{\rm bd \,}}
\newcommand{\Int} {{\rm int \,}}
\newcommand{\conv}  {{\rm conv \,}}
\newcommand{\cone}{{\rm cone\,}}
\newcommand{\ri}{{\rm ri\,}}
\newcommand{\R}{\mathbb{R}}
\newcommand{\N}{\mathbb{N}}
\newcommand{\smz}{\setminus\{0\}}
\newcommand{\nwMaxc}{{\rm (w)Max}_C\,}
\DeclareMathOperator*{\argmin}{arg\,min}
\author{Birgit Rudloff \thanks{Vienna University of Economics and Business, Institute for Statistics and Mathematics, Vienna 1020, Austria, birgit.rudloff@wu.ac.at.} 
\and Firdevs Ulus  \thanks{Bilkent University, Department of Industrial Engineering, Ankara, 06800, Turkey, firdevs@bilkent.edu.tr}
\and Robert Vanderbei \thanks{Princeton University, Department of Operations Research and Financial Engineering, Princeton, NJ 08544, USA, rvdb@princeton.edu}
}
\title{A Parametric Simplex Algorithm for Linear Vector Optimization Problems}
\date{\today} 
\begin{document}
\maketitle

\begin{abstract} \noindent
In this paper, a parametric simplex algorithm for solving linear vector optimization problems (LVOPs) is {presented}. This algorithm can be seen as a variant of the multi-objective simplex (the Evans-Steuer) algorithm~\cite{steuer73}. Different from it, the {proposed algorithm} works in the parameter space and does not aim to find the set of all efficient solutions. Instead, it finds a {solution in the sense of L\"{o}hne~\cite{lohne}, that is, it finds a} subset of efficient solutions that {allows} to generate the whole {efficient} frontier. In that sense, it can {also} be seen as a generalization of the parametric self-dual simplex algorithm, which originally is designed for solving single objective linear optimization problems, and is modified to solve two objective bounded LVOPs with the positive orthant as the ordering cone in Ruszczy\'{n}ski and Vanderbei \cite{parametric}. The algorithm proposed here works for any dimension, any {solid} pointed polyhedral ordering cone $C$ and for bounded as well as unbounded problems. 
%In each iteration, the algorithm provides a set of inequalities, which define the current partition of the parameter space and correspond to a vertex of the upper image. In addition to the usual simplex arguments, one needs to eliminate the redundant inequalities from that set. \notiz{This extra step is similar to, yet simpler than the vertex enumeration procedure, which is used in most of the objective space based LVOP algorithms.} But different from those, the proposed algorithm does not require to solve a scalar linear program in each iteration. 

Numerical results are provided to compare the proposed algorithm with an objective space based LVOP algorithm ({Benson's} algorithm in~\cite{lvop}), that also provides a solution in the sense of~\cite{lohne}, and with {the} Evans-Steuer algorithm~\cite{steuer73}. The results show that for non-degenerate problems the proposed algorithm outperforms Benson's algorithm and is on par with {the} Evan-Steuer algorithm. {For highly degenerate problems Benson's algorithm~\cite{lvop} {outperforms} the simplex-type algorithms; however, the parametric simplex algorithm is for these problems computationally much more efficient than {the} Evans-Steuer algorithm.}

\medskip

\noindent
{\bf Keywords:} Linear vector optimization, multiple objective optimization, algorithms, parameter space segmentation.

\medskip

\noindent
{\bf MSC 2010 Classification:} 90C29, 90C05, 90-08

\end{abstract}

\section{Introduction}
\label{sect:Intro}

Vector optimization problems have been studied for decades and many methods have been developed to solve or approximately solve them. In particular, there are a variety of algorithms to solve linear vector optimization problems (LVOPs). 

\subsection{Related literature}
{Among the algorithms that can solve LVOPs, some are extensions of the simplex method and are working in the variable space. In 1973, Evans and Steuer~\cite{steuer73} developed a multi-objective simplex algorithm that finds the set of all 'efficient extreme solutions' and the set of all 'unbounded efficient edges' in the variable space, {see also~\cite[Algorithm 7.1]{ehrgott_book}}. Later, some variants of this algorithm have been developed, see for instance~{\cite{armand, armand_malivert, ecker1980, ecker, isermanntest, zionts}}. More recently, Ehrgott, Puerto and Rodriguez-Ch\'{i}a~\cite{ehrgott_simplex} developed a primal-dual simplex method that works in the parameter space. This algorithm does not guarantee to find the set of all efficient solutions, instead it provides a subset of efficient solutions that are enough to generate the whole efficient frontier in case the problem is 'bounded'. All of these simplex-type algorithms are designed to solve LVOPs with any number of objective functions where the ordering is component-wise. Among these, the Evans-Steuer algorithm~\cite{steuer73} is implemented as a software called ADBASE~\cite{ADBASE}. The idea of decomposing the parameter set is also used to solve multiobjective integer programs, see for instance~\cite{Przybylski}.

In \cite{parametric}, Ruszczy\'{n}ski and Vanderbei developed an algorithm to solve LVOPs with two objectives and the efficiency of this algorithm is equivalent to solving a single scalar linear program by the parametric simplex algorithm. Indeed, the algorithm is a modification of the parametric simplex method and it produces a subset of efficient solutions that generate the whole efficient frontier in case the problem is bounded. 

Apart from the algorithms that work in the variable or parameter space, there are algorithms working in the objective space. In~\cite{dauer_MOLP}, Dauer and Liu proposed a procedure to determine the 'maximal' extreme points and edges of the image of the feasible region. Later, Benson~\cite{benson} proposed an outer approximation algorithm that also works in the objective space. These methods are motivated by the observation that the dimension of the objective space is usually much smaller than the dimension of the variable space, and decision makers tend to choose a solution based on objective values rather than variable values, see for instance~\cite{dauer_obj}. L\"ohne \cite{lohne} introduced a solution concept for LVOPs that takes into account these ideas. {Accordingly a solution consists of a set of 'point maximizers (efficient solutions)' and a set of 'direction maximizers (unbounded efficient edges)', which altogether generate the whole efficient frontier. If a problem is 'unbounded', then a solution needs to have a nonempty set of direction maximizers. There are several variants of Benson's algorithm for LVOPs. Some of them can also solve unbounded problems as long as the image has at least one vertex, but only by using an additional Phase 1 algorithm, see for instance~\cite[Section 5.4]{lohne}. The algorithms provided in~{\cite{ehr_dual, lohne, shaoLVOP, dualalgorithm}} solve  in each iteration at least two LPs {that are {of} the same size as the original problem}.} An improved variant where only one LP has to be solved in each iteration has been proposed independently in~\cite{csirmaz} and~\cite{lvop}. In addition to solving (at least) one LP, these algorithms solve a vertex enumeration problem in each iteration. As it can be seen in~{\cite{csirmaz, bensolve, bensolve_benj, shaoLVOP}}, it is also possible to employ an online vertex enumeration method. In this case, instead of solving a vertex enumeration problem from scratch in each iteration, the vertices are updated after an addition of a new inequality.  Recently, Benson's algorithm was extended to approximately solve bounded convex vector optimization problems in \cite{ehrgott, cvop}. 

\subsection{The proposed algorithm}
In this paper, we develop a parametric simplex algorithm to solve LVOPs {of} any size and with any {solid} pointed  polyhedral ordering cone. {Although the structure of the algorithm is similar to the Evans-Steuer algorithm, it is different since the algorithm proposed here works in the parameter space and it finds a solution in the sense that L\"{o}hne proposed in~\cite{lohne}. In other words, instead of generating the set of all point and direction maximizers, it only finds a subset of them that already allows to generate the whole efficient frontier. More specifically, the difference can be seen at two points. First, in each iteration instead of performing a pivot for each 'efficient nonbasic variable', we perform a pivot only for a subset of them. This already decreases the total number of pivots performed throughout the algorithm. In addition, the method of finding this subset of efficient nonbasic variables is more efficient than the method that is needed to find the whole set. Secondly, for an entering variable, instead of performing all possible pivots for all 'efficient basic variables' as in~\cite{steuer73}, we perform a single pivot by picking only one of them as the leaving variable. In this sense, the algorithm provided here can also be seen as a generalization of the algorithm proposed by Ruszczy\`{n}ski and Vanderbei~\cite{parametric} to unbounded LVOPs with more than two objectives and with more general ordering cones.}

In each iteration the algorithm provides a set of parameters which make the current vertex optimal. This parameter set is given by a set of inequalities among which the redundant ones are eliminated. This is an easier procedure than the vertex enumeration problem, which is required in {some} objective space algorithms. {Different from the objective space algorithms, the algorithm provided here does not require to solve an additional LP in each iteration. Moreover, the parametric simplex algorithm works also for unbounded problems even if the image has no vertices and generates direction maximizers at no additional cost.}

{As in the scalar case, the efficiency of simplex-type algorithms {is} expected to be better whenever the problem is non-degenerate. In vector optimization problems, one may observe different types of redundancies if the problem is degenerate. The first one corresponds to the 'primal degeneracy' concept in scalar problems. In this case, a simplex-type algorithm may find the same 'efficient solution' for many iterations. That is to say, one remains at the same vertex of the feasible region for more than one iteration. The second type of redundancy corresponds to the 'dual degeneracy' concept in scalar problems. Accordingly, the algorithm may find different 'efficient solutions' which yield the same objective values. In other words, one remains at the same vertex of the image of the feasible region. Additionally to these, a simplex-type algorithm for LVOPs may find efficient solutions which yield objective values that are not vertices of the image of the feasible region. Note that these points that are on a non-vertex face of the image set are not necessary to generate the whole efficient frontier. Thus, one can consider these solutions also as redundant.}

{The parametric simplex algorithm provided here may also find redundant solutions. However, it will be shown that the algorithm terminates at a finite time, that is, there is no risk of cycling. Moreover, compared to the Evans-Steuer algorithm, the parametric simplex algorithm finds much {fewer} redundant solutions in general.} 

{We provide different initialization methods. One of the methods requires to solve two LPs while a second method can be seen as a Phase 1 algorithm. Both of these methods work for any LVOP. Depending on the structure of the problem, it is also possible to initialize the algorithm without solving an LP or performing a Phase 1 algorithm.}

%Different from \cite{parametric}, the algorithm provided here works also for unbounded problems and generates extreme directions of the 'lower image' at no additional cost. Note that some variants of Benson's algorithm can also solve unbounded problems but use an additional Phase 1 algorithm to do so. \notiz{(We may add Evans-Steuer here..)}

This paper is structured as follows. Section~\ref{sect:Prelim} is dedicated to basic concepts and notation. In Section~\ref{sect:LVOP}, the linear vector optimization problem and solution concepts are introduced. The parametric simplex algorithm is provided in Section~\ref{sect:algorithm}. Different methods of initialization are explained in Section~\ref{subsect:initialization}. Illustrative examples are given in Section~\ref{sect:examples}. In Section~\ref{sect:comparisons}, we compare the parametric simplex algorithm provided here with the different simplex algorithms for solving LVOPs that are available in the literature. Finally, some numerical results regarding the efficiency of the proposed algorithm compared to Benson's algorithm and the Evans-Steuer algorithm are provided in Section~\ref{sect:results}.

\section{Preliminaries}
\label{sect:Prelim}
For a set $A \subseteq \R^q$, ${A^C}, \Int A$, $\ri A$, $\cl A$, $\bd A$, $\conv A$, $\cone A$ denote the {complement}, interior, relative interior, closure, boundary, convex hull, and conic hull of it, respectively. If $A \subseteq \R^q$ is a non-empty polyhedral convex set, it can be represented as
\begin{align}\label{Vrep}
A = \conv\{x^1,\ldots, x^s\}+\cone \{k^1,\ldots,k^t\},
\end{align}
where $s \in \N \setminus \{0\}, t \in \N $, each $x^i \in \R^q$ is a point, and each $k^j \in \R^q \setminus \{0\}$ is a \emph{direction} of $A$. Note that $k \in \R^q\setminus\{0\}$ is called a direction of $A$ if $A + \{\alpha k \in \R^q \st \alpha > 0 \} \subseteq A$. The set $A_{\infty}:= \cone \{k^1,\ldots,k^t\}$ is the recession cone of $A$. The set of points $\{ x^1,\ldots, x^s \}$ together with the set of directions $\{k^1, \ldots , k^t \}$ are called the \emph{generators} of the polyhedral convex set $A$. We say $(\{x^1,\ldots,x^s\},\{k^1,\ldots,k^t\})$ is a V-representation of A whenever~\eqref{Vrep} holds. {For convenience, we define $\cone \emptyset = \{0\}$.} 
%A subset $F$ of a convex set $A$ is called an \emph{exposed face} of $A$ if there exists a supporting hyperplane $H$ to $A$, with $F = A \cap H$. %(see \cite{rockafellar})

A convex cone $C$ is said to be \emph{non-trivial} if $\{0\} \subsetneq C \subsetneq \R^q$ and \emph{pointed} if it does not contain any line. A non-trivial convex pointed cone C defines a partial ordering $\leq_C$ on $\R^q$: $$v \leq_C w \: :\Leftrightarrow \: w - v \in C.$$ For a non-trivial convex pointed cone $C\subseteq \R^q$, a point $y \in A$ is called a \emph{$C$-maximal element} of A if $\left( \{y\} +  C \setminus \{0\}\right) \cap A = \emptyset$. If the cone $C$ is \emph{solid}, that is, if it has a non-empty interior, then a point $y \in A$ is called \emph{weakly $C$-maximal} if $ \left( \{y\} + \Int C \right) \cap A = \emptyset$. The set of all (weakly) $C$-maximal elements of $A$ is denoted by $\nwMaxc(A)$. The set of (weakly) $C$-minimal elements is defined by ${\rm (w)Min}_C\,(A):={\rm (w)Max}_{-C}\,(A)$. The (positive) dual cone of $C$ is the set $C^+:=\cb{z \in \R^q \st \forall y \in C: z^T y \geq 0}$. {The positive orthant of $\R^q$ is denoted by $\R^q_+$, that is, $\R^q_+ := \{y \in \R^q\st y_i \geq 0, i =1,\ldots,q\}$.}

\section{Linear vector optimization problems}
\label{sect:LVOP}

We consider a linear vector optimization problem (LVOP) in the following form
\begin{align*} \label{P}
 \text{~maximize~~~~~~~~} &P^Tx \text{~~~~(with respect to~} \leq_C) \tag{P}\\
 \text{subject to~~~~~~~~}  &Ax \leq b, 
 \\\:\: &x \geq 0, 
\end{align*}
where $P \in \R^{n \times q}$, $A \in \R^{m \times n}$, $b \in \R^m$, and $C\subseteq\R^q$ is a solid polyhedral pointed ordering cone. We denote the feasible set by $\mathcal{X}:=\{x\in \R^n \st Ax\leq b, \: x\geq 0 \}$. Throughout, we assume that \eqref{P} is feasible, i.e., $\mathcal{X}\neq \emptyset$. The image of the feasible set is defined as $P^T[\mathcal{X}] := \{ P^Tx \in \R^q \st x \in \mathcal{X}\}$. 

We consider the solution concept for LVOPs as in~\cite{lohne}. {To do so, let us recall the following.} A point $\bar x \in \mathcal{X}$ is said to be a {\em (weak) maximizer} for \eqref{P} if $P^T \bar x$ is (weakly) $C$-maximal in $P[\mathcal{X}]$. The set of (weak) maximizers of \eqref{P} is denoted by (w)Max$\eqref{P}$. The homogeneous problem of \eqref{P} is given by
\begin{align*} \label{Ph}
 \text{~maximize~~~~~~~~} &P^Tx \text{~~~~(with respect to~} \leq_C) \tag{P$^h$}\\
 \text{subject to~~~~~~~~}  &Ax \leq 0, 
 \\\:\: &x \geq 0. 
\end{align*}
The feasible region of \eqref{Ph}, namely $\mathcal{X}^h := \{x \in \R^n \st Ax \leq 0, \:x \geq 0 \}$, satisfies $\mathcal{X}^h = \mathcal{X}_\infty$, that is, the non-zero points in $\mathcal{X}^h$ are exactly the directions of $\mathcal{X}$. A direction $k \in \R^n\smz$ of $\mathcal{X}$ is called a {\em (weak) maximizer} for \eqref{P} if the corresponding point $k \in \mathcal{X}^h\smz$ is a (weak) maximizer of the homogeneous problem \eqref{Ph}.

\begin{definition}[\cite{lvop,lohne}]\label{def:solution}
A set $\bar{\mathcal{X}} \subseteq \mathcal{X}$ is called a {\em set of feasible points} for \eqref{P} and a set $\bar{\mathcal{X}}^h \subseteq \mathcal{X}^h \smz$ is called a {\em set of feasible directions} for \eqref{P}. 

A pair of sets $\of{\bar{\mathcal{X}}, \bar{\mathcal{X}}^h }$ is called a {\em finite supremizer} for \eqref{P} if $\bar{\mathcal{X}}$ is a non-empty finite set of feasible points for \eqref{P}, $\bar{\mathcal{X}}^h$ is a (not necessarily non-empty) finite set of feasible directions for \eqref{P}, and
\begin{equation}\label{eq:solution}
\conv P^T[\bar{\mathcal{X}}]  + \cone P^T[\bar{\mathcal{X}}^h] - C = P^T[\mathcal{X}] - C.
\end{equation}

A finite supremizer $(\bar{\mathcal{X}}, \bar{\mathcal{X}}^h)$ of \eqref{P} is called a {\em solution} to \eqref{P} if it consists of only maximizers.
\end{definition}

The set $\mathcal{P}:=P^T[\mathcal{X}]-C$ is called the {\em lower image} of \eqref{P}.  Let {$y^1,\ldots,y^t$ be the generating vectors of the ordering cone $C$. Then, $(\cb{0},\{y^1,\ldots,y^t\})$ is a V-representation of the cone $C$, that is, $C = \cone \{y^1,\ldots,y^t\}$. Clearly, if $(\bar{\mathcal{X}}, \bar{\mathcal{X}}^h)$ is a finite supremizer, then $(P^T[\bar{\mathcal{X}}],P^T[\bar{\mathcal{X}}^h] \cup \{-y^1,\ldots,-y^t\})$ is a V-representation of the lower image $\mathcal{P}$. }

\begin{definition}\label{def:bounded}
\eqref{P} is said to be \emph{bounded} if there exists $p\in \R^q$ such that $\mathcal{P} \subseteq \{p\}-C$. 
\end{definition}

\begin{remark}\label{rem:recessioncone}
Note that the recession cone of the lower image, $\mathcal{P}_{\infty}$, is equal to the lower image of the homogeneous problem, that is, $\mathcal{P}_{\infty} = P^T[{\mathcal{X}}^h] - C$, see \cite[Lemma 4.61]{lohne}. Clearly, $\mathcal{P}_{\infty}\supseteq -C$, which also implies $\mathcal{P}_{\infty}^+ \subseteq -C^+$. In particular, if~\eqref{P} is bounded, then we have $\mathcal{P}_{\infty}=-C$ and $\bar{\mathcal{X}}^h = \emptyset$.
\end{remark}

The {\em weighted sum} scalarized problem for a parameter vector $w \in C^+$ is
\begin{align*} \label{P1}
 \text{~maximize~~~~~~~~} &w^TP^Tx \tag{P$_1(w)$}\\
 \text{subject to~~~~~~~~}  &Ax \leq b, 
 \\\:\: &x \geq 0, 
\end{align*}
and the following well known proposition holds.
\begin{proposition}[{\cite[Theorem 2.5]{luc}}] \label{prop:scalarization}
A point $\bar{x}\in \mathcal{X}$ is a maximizer (weak maximizer) of~\eqref{P} if and only if it is an optimal solution to~\eqref{P1} for some  $w\in \Int C^+$ $(w \in C^+\setminus\{0\})$.
\end{proposition}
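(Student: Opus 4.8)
The statement splits into four implications: the characterizations of maximizers (via $w \in \Int C^+$) and of weak maximizers (via $w \in C^+ \smz$), each in both directions. The plan is to dispatch the two ``if'' directions by direct algebra, the weak ``only if'' by a separating hyperplane argument, and the maximizer ``only if'' by a theorem of the alternative; the last of these is the only place where polyhedrality of $\mathcal{X}$ and $C$ is essential. First I would record two positivity facts that follow from $C$ being a solid pointed cone together with the definition $C^+=\cb{z\st z^Ty\geq 0\:\forall y\in C}$: (i) if $w\in C^+\smz$ then $w^Tc>0$ for every $c\in\Int C$ (else openness of $\Int C$ forces $w=0$), and (ii) if $w\in\Int C^+$ then $w^Tc>0$ for every $c\in C\smz$ (the standard interior characterization $\Int C^+=\cb{w\st w^Tc>0\:\forall c\in C\smz}$, valid for closed pointed cones). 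Granting these, suppose $\bar x$ solves \eqref{P1}. If $w\in C^+\smz$ and $\bar x$ were not a weak maximizer, there would be $x\in\mathcal{X}$ with $P^Tx-P^T\bar x\in\Int C$, whence $w^TP^Tx>w^TP^T\bar x$ by (i), contradicting optimality; the maximizer case is identical using (ii) and $P^Tx-P^T\bar x\in C\smz$.

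For the weak ``only if'' direction I would invoke the separating hyperplane theorem. Weak maximality of $\bar x$ says the convex set $D:=P^T[\mathcal{X}]-P^T\bar x$ is disjoint from the nonempty open convex cone $\Int C$. Separating $D$ from $\Int C$ yields some $w\neq 0$ and $\alpha$ with $w^Td\leq\alpha\leq w^Te$ for all $d\in D$, $e\in\Int C$. Since $\bar x\in\mathcal{X}$ gives $0\in D$ and $0\in\cl\Int C=C$, letting $e\to 0$ forces $\alpha=0$. Hence $w^TP^Tx\leq w^TP^T\bar x$ for all $x\in\mathcal{X}$, i.e. $\bar x$ solves \eqref{P1}, and $w^Tc\geq 0$ on $\cl\Int C=C$, i.e. $w\in C^+\smz$.

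The maximizer ``only if'' direction is the main obstacle, and is exactly where the polyhedral hypothesis is used. Here I would pass to the tangent cone $T:=\cone(P^T[\mathcal{X}]-P^T\bar x)$, which is polyhedral, and observe two equivalences: $C$-maximality of $\bar x$ is equivalent to $T\cap C=\cb{0}$ (a nonzero element of the intersection would scale to a point of $P^T[\mathcal{X}]$ lying in $P^T\bar x+C\smz$), while $\bar x$ solving \eqref{P1} is equivalent to $w$ lying in the polar cone $N:=T^\circ$. Since $-C\subseteq T$ one has $N\subseteq C^+$ automatically, so the task reduces to producing a single $w\in N\cap\Int C^+$, that is, one vector with $w^Tt\leq 0$ for all generators $t$ of $T$ and $w^Ty^i>0$ for the generators $y^1,\ldots,y^t$ of $C$.

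Such a $w$ I would extract from Motzkin's transposition theorem: the system $\cb{w^Tt\leq 0\:\forall t,\; (y^i)^Tw>0\:\forall i}$ is solvable unless some nonnegative combination of the generators of $T$ equals a nonnegative, nontrivial combination of the $y^i$ --- but that combination is a nonzero element of $C$ (as $C$ is pointed, its generators lie in an open halfspace), so it would belong to $T\cap(C\smz)$, which is empty by $C$-maximality. Thus the system is feasible and yields the desired $w\in\Int C^+$. The finiteness of the two generator sets, hence the applicability of the alternative theorem, is precisely what polyhedrality of $\mathcal{X}$ and $C$ provides; in a general convex setting this implication can fail, since $C$-maximal points need not be properly maximal.
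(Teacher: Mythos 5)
You should first know that the paper contains no internal proof of this proposition to compare against: it is imported as a known result, cited as \cite[Theorem 2.5]{luc}. Your blind proof is, as far as I can check, correct, and it follows what is essentially the standard route for this scalarization theorem: the two ``if'' directions from the positivity facts (i) and (ii), the weak ``only if'' by separating the convex set $P^T[\mathcal{X}]-P^T\bar x$ from the open convex cone $\Int C$, and the maximizer ``only if'' --- the only step where polyhedrality matters --- via a theorem of the alternative applied to the finitely many generators of $T=\cone\of{P^T[\mathcal{X}]-P^T\bar x}$ and of $C$. The key reductions are right: $C$-maximality of $\bar x$ is equivalent to $T\cap C=\cbk{0}$ by scaling, optimality of $\bar x$ for \eqref{P1} is equivalent to $w$ lying in the polar cone $T^{\circ}=\cb{w \st w^Tt\leq 0 \ \forall t\in T}$, and the Motzkin alternative is excluded precisely because a nontrivial nonnegative combination of the $y^i$ is a nonzero point of $C$ (pointedness puts the generators in an open halfspace) that would then lie in $T\cap C$. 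One aside is false, though harmless: $-C\subseteq T$ need not hold (if $P^T[\mathcal{X}]$ is a singleton then $T=\cbk{0}$), so $T^{\circ}\subseteq C^+$ is not ``automatic''; that inclusion would be valid had you formed the cone from the lower image $\mathcal{P}=P^T[\mathcal{X}]-C$ instead. Since your reduction only requires producing some $w\in T^{\circ}\cap\Int C^+$, nothing downstream is affected. Compared with the paper's bare citation, your self-contained argument has the merit of isolating exactly where the linear/polyhedral structure enters --- finite generation of $T$ and $C$ is what makes the transposition theorem applicable --- and of correctly flagging that in a general convex setting this characterization of (non-weak) maximizers genuinely fails.
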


%\begin{proposition}[{\cite[Proposition 4.3]{lvop}}]\label{prop:strongmaximizer}
%	\textcolor{blue}{Every vertex of the lower image is maximal.} 
%\end{proposition}

%\begin{proposition} \label{prop:scalarization2}
%\notizb{If a point $\bar{x} \in \mathcal{X}$ is an optimal solution to \eqref{P1} for some $w \in \Int C^+$, then it is a maximizer.} 
%\end{proposition}
%\begin{proof}
%\notizb{Note that $\bar{x}$ is a maximizer if it satisfies $(P^T\bar{x}+C\setminus\{0\}) \cap P^T[\mathcal{X}] = \emptyset.$ Assume the contrary, that is, there exists $\tilde{x} \in \mathcal{X}$ and $\tilde{c}\in C\setminus\{0\}$ such that $P^T\bar{x}+ \tilde{c} = P^T\tilde{x}.$ As $w \in \Int C^+$, we have $w^TP^T\tilde{x} = w^TP^T\bar{x} + w^T\tilde{c} > w^TP^T\bar{x}$, which contradicts to the optimality of $\bar{x}$.}
%\end{proof}

Proposition~\ref{prop:scalarization} suggests that if one could generate optimal solutions, whenever they exist, to the problems~\eqref{P1} for $w \in \Int C^+$, then this set of optimal solutions $\bar{\mathcal{X}}$ would be a set of (point) maximizers of~\eqref{P}. Indeed, it will be enough to solve problem~\eqref{P1} for $w \in \ri W$, where
\begin{equation}\label{eq:simplex}
W:= \{w\in C^+ \st  w^Tc = 1\},
\end{equation} 
for some fixed $c \in \Int C$. Note that~\eqref{P1} is not necessarily bounded for all $w\in \ri W$. Denote the set of all $w \in \ri W$ such that~\eqref{P1} has an optimal solution by $W_b$. If one can find a finite partition {$(W_b^i)_{i=1}^s$} of $W_b$ such that for each $i\in\{1,\ldots,s\}$ there exists {$x^{i}\in \mathcal{X}$} which is an optimal solution to~\eqref{P1} for all $w \in W_b^i$, then, clearly, $\bar{\mathcal{X}} = \{x^{1}, \ldots, x^{s}\}$ will satisfy~\eqref{eq:solution} provided one can also generate a finite set of (direction) maximizers $\bar{\mathcal{X}}^h$. Trivially, if problem~\eqref{P} is bounded, then~\eqref{P1} can be solved optimally for all $w \in C^+$, $\bar{\mathcal{X}}^h = \emptyset$, and $(\bar{\mathcal{X}},\emptyset)$ satisfies~\eqref{eq:solution}. If problem~\eqref{P} is unbounded, we will construct in Section~\ref{sect:algorithm} a set $\bar{\mathcal{X}}^h$ by adding certain directions to it whenever one encounters a set of weight vectors $w\in C^+$ for which~\eqref{P1} cannot be solved optimally. The following proposition will be used to prove that this set $\bar{\mathcal{X}}^h$, together with  $\bar{\mathcal{X}} = \{x^{1}, \ldots, x^{s}\}$ will indeed satisfy~\eqref{eq:solution}. It provides a characterization of the recession cone of the lower image in terms of the weighted sum scalarized problems. More precisely, the negative of the dual of the recession cone of the lower image can be shown to consist of those $w \in C^+$ for which~\eqref{P1} can be optimally solved.
\begin{proposition}\label{prop:recessioncone}
The recession cone $\mathcal{P}_{\infty}$ of the lower image satisfies
\begin{equation*}
-\mathcal{P}_{\infty}^+ = \{w \in C^+ \st \eqref{P1} \text{~is bounded}\}.
\end{equation*}
\end{proposition}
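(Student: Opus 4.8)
The plan is to reduce both sides of the claimed identity to one and the same inequality condition on the recession directions of $\mathcal{X}$. First I would record, from Remark~\ref{rem:recessioncone}, that $\mathcal{P}_{\infty} = P^T[\mathcal{X}^h] - C$. Since $\mathcal{X}^h = \{x \in \R^n \st Ax \le 0,\, x \ge 0\}$ is a convex cone and $P^T$ is linear, $P^T[\mathcal{X}^h]$ is again a convex cone, so $\mathcal{P}_{\infty}$ is the Minkowski sum of the two cones $P^T[\mathcal{X}^h]$ and $-C$.

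Next I would apply the elementary fact that the dual of a sum of cones is the intersection of the duals, $(K_1 + K_2)^+ = K_1^+ \cap K_2^+$, together with $(-C)^+ = -C^+$, to get
\[
\mathcal{P}_{\infty}^+ = \left(P^T[\mathcal{X}^h]\right)^+ \cap (-C)^+ = \left(P^T[\mathcal{X}^h]\right)^+ \cap \left(-C^+\right).
\]
Negating yields $-\mathcal{P}_{\infty}^+ = \left(-\left(P^T[\mathcal{X}^h]\right)^+\right) \cap C^+$. Unwinding the definition of the dual cone, the membership $w \in -\left(P^T[\mathcal{X}^h]\right)^+$ is equivalent to $-w^T P^T d \ge 0$ for every $d \in \mathcal{X}^h$, i.e.\ to $w^T P^T d \le 0$ for all $d \in \mathcal{X}^h$. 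Hence
\[
-\mathcal{P}_{\infty}^+ = \left\{ w \in C^+ \st w^T P^T d \le 0 \text{ for all } d \in \mathcal{X}^h \right\}.
\]

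It then remains to identify this right-hand side with the set of $w \in C^+$ for which~\eqref{P1} is bounded. Here I would invoke the standard characterization of boundedness of a feasible linear program: since $\mathcal{X} \neq \emptyset$ by assumption, the objective $w^T P^T x = (Pw)^T x$ is bounded above on $\mathcal{X}$ if and only if $(Pw)^T d \le 0$ for every recession direction $d$ of $\mathcal{X}$, that is, for every $d \in \mathcal{X}_{\infty} = \mathcal{X}^h$. This is exactly the inequality appearing above, which finishes the identification.

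I expect the delicate points to be bookkeeping rather than conceptual: correctly using $(-C)^+ = -C^+$ and the sum-of-cones duality (which requires no closedness hypothesis, since we are intersecting duals rather than dualizing an intersection), and keeping the feasibility assumption $\mathcal{X} \neq \emptyset$ in force so that ``bounded'' for~\eqref{P1} genuinely means ``finite optimal value'' and the recession-cone boundedness criterion applies. Note also that the containment $-\mathcal{P}_{\infty}^+ \subseteq C^+$ used implicitly above was already observed in Remark~\ref{rem:recessioncone}, so restricting attention to $w \in C^+$ loses no generality.
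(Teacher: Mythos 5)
Your proposal is correct, and it diverges from the paper's proof in the second half in a genuine way. The first half is essentially identical in content: the paper derives the intermediate identity $-\mathcal{P}_{\infty}^+ = \{w \in C^+ \st \forall x^h \in \mathcal{X}^h: w^TP^Tx^h \leq 0\}$ by unwinding the definition of the dual cone (using $0 \in \mathcal{X}^h$ and $0 \in C$), whereas you obtain the same identity via the sum-of-cones duality $(K_1+K_2)^+ = K_1^+ \cap K_2^+$ and $(-C)^+ = -C^+$; this is a cosmetic difference, and your remark that no closedness hypothesis is needed in this direction is accurate. The real divergence is in identifying that set with $\{w \in C^+ \st \eqref{P1} \text{ is bounded}\}$. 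The paper routes this through LP duality applied to the homogeneous scalarization \eqref{P1h}: in one direction, $x^h = 0$ is optimal for \eqref{P1h}, so strong duality yields $y^* \geq 0$ with $A^Ty^* \geq Pw$, which is dual feasible for \eqref{P1}, and weak duality rules out unboundedness; in the other direction, a dual optimal solution of \eqref{P1} is dual feasible for \eqref{P1h}, giving an optimal $\tilde{x}^h$, and a scaling argument on the cone $\mathcal{X}^h$ finishes. You instead invoke the recession-cone criterion for boundedness of a feasible LP: since $\mathcal{X} \neq \emptyset$ and $\mathcal{X}_\infty = \mathcal{X}^h$ (a fact the paper records in Section~\ref{sect:LVOP}), the objective $(Pw)^Tx$ is bounded above on $\mathcal{X}$ if and only if $(Pw)^Td \leq 0$ for all $d \in \mathcal{X}^h$, which is exactly the intermediate condition. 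Your route is shorter and avoids duality altogether, at the cost of resting on the Minkowski--Weyl decomposition (or an equivalent standard boundedness fact for polyhedra); the paper's route is longer but stays entirely within the weak/strong duality toolkit it uses elsewhere and explicitly exhibits the dual feasible points that link \eqref{P1} and \eqref{P1h}. Both arguments are complete and rigorous.
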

\begin{proof} 
By Remark~\ref{rem:recessioncone}, we have 
$\mathcal{P}_{\infty} = P^T[\mathcal{X}^h]-C$. Using $0 \in \mathcal{X}^h$ and $0 \in C$, we obtain
\begin{align}\label{eq:Pinftyplus}
-\mathcal{P}_{\infty}^+ &= \{w\in \R^q \st \forall x^h\in \mathcal{X}^h, \forall c \in C: w^T(P^Tx^h-c)\leq 0 \} \notag\\
&=\{w \in C^+ \st \forall x^h \in\mathcal{X}^h: w^TP^Tx^h \leq 0 \}.
\end{align}
Let $w \in -\mathcal{P}_{\infty}^+$, and consider the weighted sum scalarized problem of~\eqref{Ph} given by
\begin{align*}\label{P1h}
 \text{~maximize~~~~~~~~} &w^TP^Tx \tag{P$_1^h(w)$}\\
 \text{subject to~~~~~~~~}  &Ax \leq 0, 
 \\\:\: &x \geq 0. 
\end{align*}
By~\eqref{eq:Pinftyplus}, $x^h = 0$ is an optimal solution, which implies by strong duality of the linear program that there exist $y^*\in \R^m$ with $A^Ty^* \geq Pw$ and $y^* \geq 0$. Then, $y^*$ is also dual feasible for~\eqref{P1}. By the weak duality theorem,~\eqref{P1} can not be unbounded.

For the reverse inclusion, let $w\in C^+$ be such that~\eqref{P1} is bounded, or equivalently, an optimal solution exists for~\eqref{P1} as we assume $\mathcal{X}\neq \emptyset$. By strong duality, the dual problem of~\eqref{P1} has an optimal solution $y^*$, which is also dual feasible for~\eqref{P1h}. By weak duality,~\eqref{P1h} is bounded and has an optimal solution $\tilde{x}^h$. {Then, $w \in -\mathcal{P}_{\infty}^+$ holds. Indeed, assuming the contrary, one can easily find a contradiction to the optimality of $\tilde{x}^h$.}
\end{proof}

\section{The parametric simplex algorithm for LVOPs}                                                                        
\label{sect:algorithm}
In~\cite{steuer73}, Evans and Steuer proposed a simplex algorithm to solve linear multiobjective optimization problems. The algorithm moves from one vertex of the feasible region to another until it finds the set of all {extreme} (point and direction) maximizers. In this paper we propose a parametric simplex algorithm to solve LVOPs where the structure of the algorithm is similar to the Evans-Steuer algorithm. {Different from it, the parametric simplex algorithm provides a solution in the sense of Definition~\ref{def:solution}, that is, it finds subsets of extreme point and direction maximizers that generate the lower image.} This allows the algorithm to deal with the degenerate problems more efficiently than the Evans-Steuer algorithm. More detailed comparison of the two algorithms can be seen in Section~\ref{sect:comparisons}. 

In~\cite{parametric}, Ruszczy\`{n}ski and Vanderbei generalize the parametric self dual method, which originally is designed to solve scalar LPs~\cite{vanderbei}, to solve two-objective bounded linear vector optimization problems. This is done by treating the second objective function as the auxiliary function of the parametric self dual algorithm. The algorithm provided here can be seen as a generalization of the parametric simplex algorithm from biobjective bounded LVOPs to {$q$-objective LVOPs ($q\geq2$)} that are not necessarily bounded where we also allow for an arbitrary solid polyhedral pointed ordering cone $C$.

{We first explain the algorithm for problems that have a solution. One can keep in mind that the methods of initialization proposed in Section~\ref{subsect:initialization} will verify if the problem has a solution or not.}
{\begin{assumption}\label{assumption1}
There exists a solution to problem~\eqref{P}. 
\end{assumption}
This assumption is equivalent to having a nontrivial lower image $\mathcal{P}$, that is, $\emptyset \neq \mathcal{P} \subsetneq \R^q $. Clearly $\mathcal{P} \neq \emptyset$ implies $\mathcal{X} \neq \emptyset$, which is equivalent to our standing assumption. Moreover, by Definition~\ref{def:solution} and Proposition~\ref{prop:scalarization},} Assumption~\ref{assumption1} implies that there exists a maximizer which guarantees that there exists some $w^0 \in \Int C^+$ such that problem~$(\text{P}_1 (w^ 0))$ has an optimal solution. In Section~\ref{subsect:initialization}, we will propose methods to find such a $w^0$. It will be seen that the algorithm provided here finds a solution if there exists one.

\subsection{The parameter set $\Lambda$}
\label{subsect:Lambda}
Throughout the algorithm we consider the scalarized problem~\eqref{P1} for $w \in W$ where $W$ is given by~\eqref{eq:simplex} for some fixed $c\in \Int C$. As $W$ is $q-1$ dimensional, we will transform the parameter set $W$ into a set $\Lambda\subseteq \R^{q-1}$. Assume without loss of generality that $c_q = 1$. Indeed, since $C$ was assumed to be a solid cone, there exists some $c \in \Int C$ such that either $c_q = 1$ or $c_q = -1$. For $c_q = -1$, one can consider problem~\eqref{P} where $C$ and $P$ are replaced by $-C$ and $-P$.

Let $\tilde{c} = (c_1,\ldots,c_{q-1})^T\in \R^{q-1}$ and define the function $w(\lambda): \R^{q-1} \rightarrow \R^q$ and the set $\Lambda \subseteq \R^{q-1}$ as follows:
\begin{align*}
%\label{Lambda}
w(\lambda) &:= (\lambda_1, \ldots, \lambda_{q-1}, 1-\tilde{c}^T\lambda)^T, \\
\Lambda &:= \{\lambda \in \R^{q-1} \st w(\lambda) \in C^+\}.
\end{align*}
As we assume $c_q = 1$, $c^Tw(\lambda) = 1$ holds for all $\lambda \in \Lambda$. Then, $w(\lambda) \in W$ for all $\lambda \in \Lambda$ and for any $w\in W$, $(w_1,\ldots, w_{q-1})^T \in \Lambda$. Moreover, if $\lambda \in \Int \Lambda,$ then $w(\lambda) \in \ri W$ and if $w \in \ri W$, then $(w_1,\ldots, w_{q-1})^T \in \Int\Lambda$. Throughout the algorithm, we consider the parametrized problem $$(\text{P}_{\lambda}):=(\text{P}_1(w(\lambda)))$$ for some generic $\lambda \in \R^{q-1}$.

\subsection{Segmentation of $\Lambda$: dictionaries and their optimality region}
\label{subsect:segmentation}
{We will use the terminology for the simplex algorithm as it is used in \cite{vanderbei}. First, we introduce slack variables $[x_{n+1},\ldots, x_{n+m}]^T$ to obtain $x \in \R^{n+m}$ and} rewrite $(\text{P}_{\lambda})$ as follows
\begin{align*}
 \text{~maximize~~~~~~~~} &w(\lambda)^T [P^T \:\: 0]x \tag{P$_{\lambda}$}\\
 \text{subject to~~~~~~~~}  &[A \: I] x = b, 
 \\\:\: &x \geq 0, 
\end{align*}
where $I$ is the identity and $0$ is the zero matrix, all in the correct sizes. {We consider a partition of the variable indices $\{1,2,\ldots,n+m\}$ into two sets $\mathcal{B}$ and $\mathcal{N}$. Variables $x_j, j \in \mathcal{B}$, are called \emph{basic variables} and $x_j, j\in \mathcal{N}$, are called \emph{nonbasic variables}. We write $x=\big[x_{\mathcal{B}}^T\:\:\:\:x_{\mathcal{N}}^T\big]^T$ and permute the columns of $[A\: I]$ to obtain $\big[B\:\:\:N\big]$ satisfying $[A \:\:I]x=Bx_{\mathcal{B}}+Nx_{\mathcal{N}}$, where $B \in \R^{m \times m}$ and $N \in \R^{m \times n}$. Similarly, we form matrices $P_{\mathcal{B}}\in \R^{m\times q}$, and $P_{\mathcal{N}}\in \R^{n\times q}$ such that $[P^T\:\:0]x = P_{\mathcal{B}}^Tx_{\mathcal{B}} + P_{\mathcal{N}}^Tx_{\mathcal{N}}$. In order to keep the notation simple, instead of writing $[P^T\:\:0]x$ we will occasionally write $P^Tx$, where $x$ stands then for the original decision variables in $\R^n$ without the slack variables. }

{Whenever $B$ is nonsingular, $x_{\mathcal{B}}$ can be written in terms of the nonbasic variables as $x_{\mathcal{B}} = B^{-1}b - B^{-1}Nx_\mathcal{N}$. Then, the objective function of~$(\text{P}_{\lambda})$ is 
\begin{equation*}
w(\lambda)^T[P^T\:\:0]x = w(\lambda)^T{\xi}(\lambda) - w(\lambda)^T{Z}_{\mathcal{N}}^Tx_{\mathcal{N}},
\end{equation*}
where ${\xi}(\lambda) = P_{\mathcal{B}}^TB^{-1}b$ and ${Z}_{\mathcal{N}} = (B^{-1}N)^TP_{\mathcal{B}} - P_{\mathcal{N}}$. }

{We say that each choice of basic and nonbasic variables defines a unique \emph{dictionary}. Denote the dictionary defined by $\mathcal{B}$ and $\mathcal{N}$ by $D$. The \emph{basic solution} that corresponds to $D$ is obtained by setting $x_{\mathcal{N}} = 0$. In this case, the values of the basic variables become $B^{-1}b$. Both the dictionary and the basic solution corresponding to this dictionary are said to be \emph{primal feasible} if $B^{-1}b \geq 0$. Moreover, if $w(\lambda)^T{Z}_{\mathcal{N}}^T \geq 0$, then we say that the dictionary $D$ and the corresponding basic solution are \emph{dual feasible}. We call a dictionary and the corresponding basic solution \emph{optimal} if they are both primal and dual feasible. }

For $j \in \mathcal{N}$, introduce the halfspace
\begin{equation*}%\label{eq:IDj}
I^D_j:=\{\lambda\in \R^{q-1} \st w(\lambda)^T{Z}_{\mathcal{N}}^T{e}^j \geq 0 \},
\end{equation*}
 where ${e}^j \in\R^n$ denotes the unit column vector with the entry corresponding to the variable $x_j$ being $1$. Note that if $D$ is known to be primal feasible, then $D$ is optimal for $\lambda \in \Lambda^D$, where
\begin{equation*}\label{eq:LambdaD}
\Lambda^D := \bigcap_{j\in \mathcal{N}} I^D_j.
\end{equation*}
The set $\Lambda^D \cap \Lambda$ is said to be the \emph{optimality region} of dictionary $D$. 

 Proposition~\ref{prop:scalarization} already shows that a basic solution corresponding to a dictionary $D$ with $\Lambda^D \cap \Lambda \neq \emptyset$ yields a %vertex of the lower image $\mathcal{P}$ 
 (weak) maximizer of \eqref{P}. Throughout the algorithm we will move from dictionary to dictionary and collect their basic solutions into a set $\bar{\mathcal{X}}$. We will later show that this set will be part of the solution $(\bar{\mathcal{X}}, \bar{\mathcal{X}}^h)$ of \eqref{P}. The algorithm will yield a partition of the parameter set $\Lambda$ into optimality regions of dictionaries and regions where $(\text{P}_{\lambda})$ is unbounded. The next subsections explain how to move from one dictionary to another and how to detect and deal with unbounded problems.

\subsection{The set $J^D$ of entering variables}
\label{subsect:JD}
We call $(I_j^D)_{j \in J^D}$ a \emph{defining (non-redundant)} collection of half-spaces of the optimality region $\Lambda^D \cap \Lambda$ if $J^D \subseteq \mathcal{N}$ satisfies  
%\begin{equation}\label{eq:definingineq}
%\Lambda^{D} \cap \Lambda = \bigcap_{j \in J^{D}}I^{D}_j \cap \Lambda, \text{~~and} \\
%\Lambda^{D} \cap \Lambda \subsetneq \bigcap_{j \in J}I^{D}_j \cap \Lambda, \text{~~ for any~} J %\subsetneq J^{D}. 
%\end{equation} 
\begin{align}
\begin{split}
\Lambda^{D} \cap \Lambda &= \bigcap_{j \in J^{D}}I^{D}_j \cap \Lambda \text{~~and}\\
\label{eq:definingineq} \Lambda^{D} \cap \Lambda &\subsetneq \bigcap_{j \in J}I^{D}_j \cap \Lambda, \text{~~ for any~} J \subsetneq J^{D}.
\end{split} 
\end{align}
For a dictionary $D$, any nonbasic variable $x_j, j\in J^D$, is a candidate entering variable. Let us call the set $J^D$ an \emph{index set of entering variables} for dictionary $D$.

%\begin{remark}\label{rem:definingineq}
For each dictionary throughout the algorithm, an index set of entering variables is found. This can be done {e.g. by the following two methods. Firstly,} using the duality of polytopes, the problem of finding defining inequalities can be transformed to the problem of finding a convex hull of given points. Then, the algorithms developed for this matter, {see for instance~\cite{quickhull}}, can be employed. {Secondly}, in order to check if $j\in\mathcal{N}$ corresponds to a defining or a redundant inequality one can consider the following linear program in $\lambda\in\R^{q-1}$
\begin{align*}
\text{~maximize~~~~~~~~} &w(\lambda)^T Z_{\mathcal{N}}^Te^j \\
\text{subject to~~~~~~~~}  &w(\lambda)^TZ_{\mathcal{N}}^Te^{\bar{j}} \geq 0, \:\: \text{for all~~}  \bar{j}\in \mathcal{N}\setminus (\{j\} \cup J^{\text{redun}}), \\
&w(\lambda)^TY\geq 0,
\end{align*}
where $J^{\text{redun}}$ is the index set of redundant inequalities that have been already found and $Y = [y^1, \ldots, y^t]$ is the matrix where $y^1,\ldots,y^t$ are the generating vectors of the ordering cone $C$. The inequality corresponding to the nonbasic variable $x_j$ is redundant if and only if an optimal solution to this problem yields $w(\lambda^*)^T Z_{\mathcal{N}}^Te^j \leq 0$. In this case, we add $j$ to the set $J^{\text{redun}}$. Otherwise, it is a defining inequality for the region $\Lambda^D\cap \Lambda$ and we add $j$ to the set $J^D$. The set $J^D$ is obtained by solving this linear program successively for each untested inequality against the remaining.
\begin{remark} \label{rem:redundantineq}
For the  numerical examples provided in Section~\ref{sect:examples}, the {second} method is employed. Note that the number of variables for each linear program is $q-1$, which is much smaller than the number of variables $n$ of the original problem in general. Therefore, each linear program can be solved accurately and fast. Thus, this is a reliable and sufficiently efficient method to find $J^D$.
\end{remark}

Before applying {one of} these methods, one can also employ a modified Fourier-Motzkin elimination algorithm as described in \cite{modifiedFM} in order to decrease the number of redundant inequalities. Note that this algorithm has a worst-case complexity of $\mathcal{O}(2^{q-1}(q-1)^2)n^2)$. Even though it does not guarantee to detect all of the redundant inequalities, it decreases the number significantly.
%Even though this method seems as it is not the most efficient among all it gives $J^D$ more accurately within an acceptable time period.u8u   

Note that different methods may yield a different collection of indices as the set $J^D$ might not be uniquely defined. However, the proposed algorithm works with any choice of $J^D$.
%\end{remark}

\subsection{Pivoting}
\label{subsect:pivoting}
In order to initialize the algorithm one needs to find a dictionary $D^0$ for the parametrized problem~$(\text{P}_{\lambda})$ such that the optimality region of  $D^0$ satisfies $\Lambda^{D^0} \cap \Int \Lambda \neq \emptyset$. Note that the existence of $D^0$ is guaranteed by Assumption~\ref{assumption1} and {by Proposition~\ref{prop:scalarization}}. There are different methods to find an initial dictionary and these will be discussed in Section~\ref{subsect:initialization}. For now, assume that $D^0$ is given. By Proposition~\ref{prop:scalarization}, the basic solution $x^0$ corresponding to $D^0$ is a maximizer to~\eqref{P}. As part of the initialization, we find an index set of entering variables $J^{D^0}$ as defined by~\eqref{eq:definingineq}.

Throughout the algorithm, for each dictionary $D$ with given basic variables $\mathcal{B}$, optimality region $\Lambda^D \cap \Lambda$, and index set of entering variables $J^D$, we select an entering variable $x_j, j\in J^D$, and pick analog to the standard simplex method a leaving variable $x_i$ satisfying
\begin{equation}\label{eqn:leavingindex}
i \in \argmin_{\scriptsize {\begin{array}{c}
i\in\mathcal{B} \\ 
(B^{-1}N)_{ij} >0
\end{array} }} \frac{(B^{-1}b)_i}{(B^{-1}N)_{ij}},
\end{equation}
whenever there exists some $i$ with $(B^{-1}N)_{ij} >0$. {Here}, indices $i,j$ are written on behalf of the entries that correspond to the basic variable $x_i$ and the nonbasic variable $x_j$, respectively. 
%$N_j$ is the column of $N$ corresponding to the nonbasic variable $x_j$, and $(B^{-1}(\cdot))_i$, is the element of the vector $B^{-1}(\cdot)$ corresponding to the basic variable $x_i$. 
Note that this rule of picking leaving variables, together with the initialization of the algorithm with a primal feasible dictionary $D^0$,  guarantees that each dictionary throughout the algorithm is primal feasible. 

If there exists a basic variable $x_i$ with $(B^{-1}N)_{ij} >0$ satisfying~\eqref{eqn:leavingindex}, we perform the pivot $x_j \leftrightarrow x_i$ to form the dictionary $\bar{D}$ with basic variables $\bar{\mathcal{B}} = (\mathcal{B} \cup \{j\}) \setminus \{i\}$ and nonbasic variables $\bar{\mathcal{N}} = (\mathcal{N}\cup \{i\})\setminus \{j\}$. For dictionary $\bar{D}$, %the optimality condition $\lambda \in I^{\bar{D}}_i$ with respect to the nonbasic variable $x_i$ is equivalent to $\lambda \in \cl(I^D_j)^C = \{\lambda \in \R^{q-1} \st w(\lambda)^T{Z}_{\mathcal{N}}^T{e}^j \leq 0 \}$. In other words, 
we have $I^{\bar{D}}_i = \cl(I^D_j)^C = \{\lambda \in \R^{q-1} \st w(\lambda)^T{Z}_{\mathcal{N}}^T{e}^j \leq 0 \}.$ %Let us note here that, whenever we consider the dictionary $\bar{D}$, if $i \in J^{\bar{D}}$, that is, if $I^{\bar{D}}_i$ is a defining halfspace for the optimality region $\Lambda^{\bar{D}}\cap W_1$ of $\bar{D}$, (and furthermore if for entering variable $x_i$ the leaving variable is picked as $x_j$) the corresponding pivot would lead to $D$.   
If dictionary $\bar{D}$ is considered at some point in the algorithm, it is known that the pivot $x_i \leftrightarrow x_j$ will yield the dictionary $D$ considered above. Thus, we call $(i,j)$ an \emph{explored pivot (or direction)} for $\bar{D}$. We denote the set of all explored pivots of dictionary $\bar{D}$ by $E^{\bar{D}}$.

\subsection{Detecting unbounded problems and constructing the set $\bar{\mathcal{X}}^h$}
\label{subsect:unbounded}
Now, consider the case where there is no candidate leaving variable for an entering variable $x_j, j\in J^D$, of dictionary $D$, that is, $(B^{-1}Ne^j) \leq 0$. As one can not perform a pivot, it is not possible to go beyond the halfspace $I^D_j$. Indeed, the parametrized problem~$(\text{P}_{\lambda})$ is unbounded for $\lambda \notin I^D_j$. The following proposition shows that in that case, a direction of the recession cone of the lower image can be found from the current dictionary $D$, see Remark~\ref{rem:recessioncone}.

\begin{proposition} \label{prop:extremedir}
Let $D$ be a dictionary with basic and nonbasic variables $\mathcal{B}$ and $\mathcal{N}$, $\Lambda^D \cap \Lambda$ be its optimality region satisfying $\Lambda^D \cap \Int \Lambda \neq \emptyset$, and $J^D$ be an index set of entering variables. If for some $j \in J^D$, $(B^{-1}Ne^j) \leq 0$, then the direction $x^h$ defined by setting $x^h_{\mathcal{B}} = -B^{-1}Ne^j$ and $x^h_{\mathcal{N}} = {e}^j$ is a maximizer to~\eqref{P} and $P^T x^h = -{Z}_{\mathcal{N}}^T{e}^j$. 
\end{proposition}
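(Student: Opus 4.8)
The plan is to verify the two claims separately: first that $x^h$ as defined is indeed a direction of $\mathcal{X}$ (equivalently, a nonzero point of $\mathcal{X}^h$), and second that it is a maximizer, with the objective value identity following by direct computation.

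First I would check feasibility of $x^h$ for the homogeneous problem~\eqref{Ph}. By construction $x^h_{\mathcal{N}} = e^j \geq 0$, and since $(B^{-1}Ne^j) \leq 0$ we have $x^h_{\mathcal{B}} = -B^{-1}Ne^j \geq 0$, so $x^h \geq 0$. For the equality constraint, recall that the columns of $[A\:I]$ were permuted into $[B\:\:N]$, so $[A\:I]x^h = Bx^h_{\mathcal{B}} + Nx^h_{\mathcal{N}} = B(-B^{-1}Ne^j) + Ne^j = 0$. Hence $x^h \in \mathcal{X}^h$, and $x^h \neq 0$ because $x^h_{\mathcal{N}} = e^j \neq 0$. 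Thus $x^h$ is a feasible direction of $\mathcal{X}$. The objective-value identity is then just the reading off of the reduced-cost formula: since $[P^T\:0]x = P_{\mathcal{B}}^Tx_{\mathcal{B}} + P_{\mathcal{N}}^Tx_{\mathcal{N}}$, substituting gives $P^Tx^h = P_{\mathcal{B}}^T(-B^{-1}Ne^j) + P_{\mathcal{N}}^Te^j = -\big((B^{-1}N)^TP_{\mathcal{B}} - P_{\mathcal{N}}\big)^Te^j = -Z_{\mathcal{N}}^Te^j$, which is the stated formula.

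The substantive part is showing $x^h$ is a maximizer, i.e.\ that the corresponding point $x^h \in \mathcal{X}^h\smz$ is a (weak) maximizer of the homogeneous problem~\eqref{Ph}. The natural route is Proposition~\ref{prop:scalarization} applied to \eqref{Ph}: it suffices to exhibit a weight $w \in \Int C^+$ (for a point maximizer) such that $x^h$ is optimal for the scalarized homogeneous problem \eqref{P1h}. I would take $w = w(\lambda^*)$ for some $\lambda^* \in \Lambda^D \cap \Int\Lambda$, which is nonempty by hypothesis; by the correspondence established in Section~\ref{subsect:Lambda}, $\lambda^* \in \Int\Lambda$ gives $w(\lambda^*) \in \ri W \subseteq \Int C^+$. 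For such $w$, the homogeneous objective value of $x^h$ is $w^TP^Tx^h = -w^TZ_{\mathcal{N}}^Te^j$, and since $\lambda^* \in \Lambda^D \subseteq I^D_j$ while simultaneously $j$ being the entering variable with $(B^{-1}Ne^j)\leq 0$ forces the boundary behavior, one sees $w^TZ_{\mathcal{N}}^Te^j = 0$ on the relevant face, so $w^TP^Tx^h = 0$. It then remains to argue this is the optimal (maximal) value of \eqref{P1h}; here I would invoke Proposition~\ref{prop:recessioncone}, which tells us that $w \in -\mathcal{P}_\infty^+$ exactly when \eqref{P1} is bounded, and its proof shows that in the bounded-weight regime the optimal value of the homogeneous scalarization is $0$, attained at $x^h=0$, so no feasible direction can exceed $0$.

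The main obstacle I anticipate is pinning down precisely why $w^TZ_{\mathcal{N}}^Te^j = 0$ rather than merely $\geq 0$ for the chosen $w$, and correspondingly why $x^h$ attains the optimum of \eqref{P1h} rather than being merely feasible. The delicate point is the interplay between the strict interior condition $\Lambda^D \cap \Int\Lambda \neq \emptyset$ and the unboundedness direction $j \in J^D$: one must choose $\lambda^*$ on the boundary face $I^D_j = \{w(\lambda)^TZ_{\mathcal{N}}^Te^j = 0\}$ that still lies in $\Int\Lambda$, which is possible precisely because $j \in J^D$ is a \emph{defining} (non-redundant) inequality, guaranteeing that this face meets the interior of the optimality region. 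I would therefore need to first establish that $J^D$ being a defining collection ensures the hyperplane $w(\lambda)^TZ_{\mathcal{N}}^Te^j = 0$ intersects $\Int\Lambda$ in the closure of $\Lambda^D\cap\Lambda$, and then run the scalarization/duality argument at that boundary weight to conclude optimality of $x^h$ in \eqref{P1h}.
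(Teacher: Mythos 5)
Your outline follows the same route as the paper's proof: the feasibility check ($[A\:I]x^h = Bx^h_{\mathcal{B}}+Nx^h_{\mathcal{N}}=0$, nonnegativity from $B^{-1}Ne^j\le 0$) and the identity $P^Tx^h=-Z_{\mathcal{N}}^Te^j$ are exactly the paper's computations, and the maximizer claim via Proposition~\ref{prop:scalarization} applied to \eqref{Ph} and \eqref{P1h} is also the paper's route. But your proof is incomplete at precisely the point you flag at the end: you never actually establish that there exists $\lambda^*\in\Lambda^D\cap\Int\Lambda$ with $w(\lambda^*)^TZ_{\mathcal{N}}^Te^j=0$; your middle paragraph only gestures at it (``forces the boundary behavior''), which is not an argument. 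The issue is real and not a technicality: for a generic $\lambda\in\Lambda^D\cap\Int\Lambda$ one has $w(\lambda)^TZ_{\mathcal{N}}^Te^j>0$, hence $w(\lambda)^TP^Tx^h<0$ while the optimal value of \eqref{P1h} at $w(\lambda)$ is $0$ (attained at the origin), so $x^h$ is \emph{not} optimal for such weights. (For instance, in Example~\ref{ex:1}, dictionary $D^0$ with $j=3$ and $\lambda=(0.6,0.3)$ gives exactly this situation.) Interestingly, the paper's own proof elides the same point: it asserts that $x^h$ is optimal for the parametrized homogeneous problem for \emph{every} $\lambda\in\Lambda^D\cap\Int\Lambda$, which read literally is too strong, and it never uses the hypothesis $j\in J^D$. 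Your version is the more honest account of what must be shown; it just stops short of showing it.

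The missing step is exactly where non-redundancy of $j\in J^D$ enters, and it can be filled in a few lines. Pick $\lambda_0\in\Lambda^D\cap\Int\Lambda$ and set $f(\lambda):=w(\lambda)^TZ_{\mathcal{N}}^Te^j$, an affine function. If $f(\lambda_0)=0$, take $\lambda^*=\lambda_0$. Otherwise $f(\lambda_0)>0$, and by the second line of \eqref{eq:definingineq} with $J=J^D\setminus\{j\}$ there exists $\mu\in\Lambda\cap\bigcap_{j'\in J^D\setminus\{j\}}I^D_{j'}$ with $\mu\notin\Lambda^D\cap\Lambda$; the first line of \eqref{eq:definingineq} then forces $f(\mu)<0$. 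Hence $f$ vanishes at some $\lambda^*=(1-t)\lambda_0+t\mu$ with $t\in(0,1)$; convexity of the half-spaces $I^D_{j'}$ and of $\Lambda$ gives $\lambda^*\in\bigcap_{j'\in J^D}I^D_{j'}\cap\Lambda=\Lambda^D\cap\Lambda$, and $\lambda^*\in\Int\Lambda$ because the open segment from an interior point of a convex set to any point of that set stays in the interior. Now at $w^*=w(\lambda^*)\in\ri W\subseteq\Int C^+$, every $x\in\mathcal{X}^h$ satisfies $w^{*T}P^Tx=-w^{*T}Z_{\mathcal{N}}^Tx_{\mathcal{N}}\le 0$ since the reduced costs are nonnegative on $\Lambda^D$, while $w^{*T}P^Tx^h=-f(\lambda^*)=0$; so $x^h$ is optimal for \eqref{P1h} at $w^*$ and Proposition~\ref{prop:scalarization} finishes the proof. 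Note that this direct bound makes your detour through Proposition~\ref{prop:recessioncone} unnecessary; that detour also silently requires knowing that \eqref{P1} is bounded at $w^*$ (i.e., primal feasibility of $D$), whereas the inequality $w^{*T}Z_{\mathcal{N}}\ge 0$ alone already caps the homogeneous problem at $0$.
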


\begin{proof}
Assume $(B^{-1}Ne^j) \leq 0$ for $j \in J^D$ and define $x^h$ by setting $x^h_{\mathcal{B}} = -B^{-1}Ne^j$ and $x^h_{\mathcal{N}} = {e}^j$.
By definition, the direction $x^h$ would be a maximizer for~\eqref{P} if and only if it is a (point) maximizer for the homogeneous problem~\eqref{Ph}, see section~\ref{sect:LVOP}. It holds
\begin{align*}
[A\:\: I]x^h =  B x^h_{\mathcal{B}} + N x^h_{\mathcal{N}} = 0.
\end{align*}
Moreover, $x^h_{\mathcal{N}}= {e}^j \geq 0$ and $x^h_{\mathcal{B}} = - B^{-1}Ne^j \geq 0$ by assumption. Thus, $x^h$ is primal feasible for problem~\eqref{Ph} and also for problem~$(\text{P}_1^h (w(\lambda)))$ for all $\lambda\in\Lambda$, that is, $x^h \in \mathcal{X}^h \setminus\{0\}$. Let $\lambda \in \Lambda^D\cap \Int \Lambda$, which implies $w(\lambda) \in \ri W\subseteq \Int C^+$. Note that by definition of the optimality region, it is true that $w(\lambda)^T{Z}_{\mathcal{N}}^T \geq 0$. %Now, consider the homogeneous problem $(\text{P}_1^h w(\lambda))$ parametrized by $w(\lambda) \in \ri W$. 
Thus, $x^h$ is also dual feasible for $(\text{P}_1^h (w(\lambda)))$ and it is an optimal solution for the parametrized homogeneous problem for $\lambda \in \Lambda^D \cap \Int \Lambda$. By Proposition~\ref{prop:scalarization} (applied to~\eqref{Ph} and $(\text{P}_1^h (w(\lambda)))$), $x^h$ is a maximizer of~\eqref{Ph}. 
The value of the objective function of \eqref{Ph} %$(\text{P}_1^h (w(\lambda)))$} 
at $x^h$ is given by
%\begin{align*}
%w(\lambda)^T[P^T\:\:0] x^h &= w(\lambda)^T\big(P_{\mathcal{B}}^T x^h_{\mathcal{B}}+P_{\mathcal{N}}^{\notiz{T}}x^h_{\mathcal{N}}\big)\\
%& = w(\lambda)^T\big(-P^T_{\mathcal{B}}B^{-1}N_j + P^T_{\mathcal{N}}{e}^j\big)\\
%& = - w(\lambda)^T\big(P^T_{\mathcal{B}}B^{-1}N - P^T_{\mathcal{N}}\big){e}^j\\
%& = -w(\lambda)^T{Z}_{\mathcal{N}}^T x^h_{\mathcal{N}}.
%\end{align*}
\begin{align*}
[P^T\:\:0] x^h = P_{\mathcal{B}}^T x^h_{\mathcal{B}}+P_{\mathcal{N}}^{T}x^h_{\mathcal{N}} = -{Z}_{\mathcal{N}}^T {e}^j.
\end{align*}
%\notiz{Thus, $[P^T \:\: 0] x^h = -{Z}_{\mathcal{N}}^T{e}^j$.}
\end{proof}

\begin{remark}\label{rem:extremedir}
If for an entering variable $x_j, j \in J^D$, of dictionary $D$, there is no candidate leaving variable, we conclude that problem~\eqref{P} is unbounded in the sense of Definition~\ref{def:bounded}. 
Then, in addition to the set of point maximizers $\bar{\mathcal{X}}$ one also needs to find the set of (direction) maximizers $\bar{\mathcal{X}}^h$ of~\eqref{P}, which by Proposition~\ref{prop:extremedir} can be obtained by collecting directions $x^h$ defined by $x^h_{\mathcal{B}} = -B^{-1}Ne^j$ and $x^h_{\mathcal{N}} = {e}^j$ for every $j \in J^D$ with $B^{-1}Ne^j \leq 0$ for all dictionaries visited throughout the algorithm.
For an index set $J^D$ of entering variables of each dictionary $D$, we denote the set of indices of entering variables with no candidate leaving variable for dictionary $D$ by $J^D_b:=\{j \in J^D \st B^{-1}Ne^j \leq 0\}$. In other words, $J^D_b \subseteq J^D$ is such that for any $j\in J^D_b$, $(\text{P}_{\lambda})$ is unbounded for $\lambda \notin I^D_j$. 
\end{remark}

%\begin{remark} \label{rem:extremedir2}
%It is clear from the proof of Proposition~\ref{prop:extremedir} that $P^Tx^h = %-{Z}_{\mathcal{N}}^T{e}^j$. 
%\end{remark}

\subsection{Partition of $\Lambda$: putting it all together}
\label{subsect:alltogether}
We have seen in the last subsections that basic solutions of dictionaries visited by the algorithm yield (weak) point maximizers of~\eqref{P} and partition $\Lambda$ into optimality regions for bounded problems~$(\text{P}_{\lambda})$, while encountering an entering variable with no leaving variable in a dictionary yields direction maximizers of~\eqref{P} as well as regions of $\Lambda$ corresponding to unbounded problems~$(\text{P}_{\lambda})$. This will be the basic idea to construct the two sets $\bar{\mathcal{X}}$ and $\bar{\mathcal{X}}^h$ and to obtain a partition of the parameter set $\Lambda$. In order to show that $(\bar{\mathcal{X}},\bar{\mathcal{X}}^h)$  produces a solution to ~\eqref{P}, one still needs to ensure finiteness of the procedure, that the whole set $\Lambda$ is covered, and that the basic solutions of dictionaries visited yield not only weak point maximizers of~\eqref{P}, but point maximizers.

Observe that whenever $x_j$, $j\in J^D$, is the entering variable for dictionary $D$ with 
$\Lambda^D \cap \Lambda \neq \emptyset$ and there exists a leaving variable $x_i$, the optimality region $\Lambda^{\bar{D}}$ for dictionary $\bar{D}$ after the pivot is guaranteed to be non-empty. Indeed, it is easy to show that $$\emptyset \subsetneq \Lambda^{\bar{D}} \cap \Lambda^D \subseteq \{\lambda \in \R^{q-1} \st w(\lambda)^T{Z}_{\mathcal{N}}{e}^j = 0\},$$ where $\mathcal{N}$ {is the collection of nonbasic} variables of dictionary $D$. Moreover, the basic solutions read from dictionaries $D$ and $\bar{D}$ are both optimal solutions to the parametrized problem~$(\text{P}_{\lambda})$ for $\lambda \in \Lambda^{\bar{D}} \cap \Lambda^D$. Note that the common optimality region of the two dictionaries has no interior. 

\begin{remark}\label{rem:wfrominterior}
\begin{enumerate}[a.]
\item In some cases it is possible that $\Lambda^{\bar{D}}$ itself has no interior and it is a subset of the neighboring optimality regions corresponding to some other dictionaries.
\item Even though it is possible to come across dictionaries with optimality regions having empty interior, for any dictionary $D$ found during the algorithm $\Lambda^D \cap \Int \Lambda \neq \emptyset$ holds. This is guaranteed by starting with a dictionary $D^0$ satisfying $\Lambda^{D^0}\cap\Int \Lambda \neq \emptyset$ together with the rule of selecting the entering variables, see~\eqref{eq:definingineq}. More specifically, throughout the algorithm, whenever $I_j^D$ corresponds to the boundary of $\Lambda$ it is guaranteed that $j \notin J^D$. By this observation and by Proposition~\ref{prop:scalarization}, it is clear that the basic solution corresponding to the dictionary $D$ is not only a weak maximizer but it is a maximizer.
\end{enumerate}
\end{remark}

Let us denote the set of all primal feasible dictionaries $D$ satisfying $\Lambda^D \cap \Int \Lambda \neq \emptyset$ by $\mathcal{D}$. Note that $\mathcal{D}$ is a finite collection. Let the set of parameters $\lambda \in \Lambda$ yielding bounded scalar problems~$(\text{P}_{\lambda})$ be $\Lambda_b$. Then it can easily be shown that
\begin{align}\label{eq:Lambda_b}
\Lambda_b &:= \{\lambda \in \Lambda \st (\text{P}_{\lambda}) \text{~has an optimal solution}\}  \\
& =\bigcup_{D\in\mathcal{D}}(\Lambda^D\cap \Lambda). \notag %\\
%& \subseteq \Lambda.
\end{align}

Note that not all dictionaries in $\mathcal{D}$ are required to be known in order to cover $\Lambda_b$. First, the dictionaries mentioned in Remark~\ref{rem:wfrominterior} a. do not provide a new region within $\Lambda_b$. One should keep in mind that the algorithm may still need to visit some of these dictionaries in order to go beyond the optimality region of the current one. Secondly, in case there are multiple possible leaving variables for the same entering variable, instead of performing all possible pivots, it is enough to pick one leaving variable and continue with this choice. Indeed, choosing different leaving variables leads to different partitions of the same subregion within $\Lambda_b$. % which also is not necessary to find a solution in the sense of Definition~\ref{def:solution}. %Remember that in order to find a solution given by definition~\ref{def:solution}, it is enough to find a single optimal solution to~$(\text{P}_{\lambda})$ for each $\lambda \in \Lambda_b$.}
	
By this observation, it is clear that there is a subcollection of dictionaries $\bar{\mathcal{D}} \subseteq \mathcal{D}$ which defines a partition of $\Lambda_b$ in the following sense 
\begin{equation}\label{eq:Lambda_b1}
\bigcup_{D\in\bar{\mathcal{D}}}(\Lambda^D\cap \Lambda)=\Lambda_b.
\end{equation} 
If there is at least one dictionary $D\in\bar{\mathcal{D}}$ with $J^D_b\neq\emptyset$, it is known by Remark~\ref{rem:extremedir} that \eqref{P} is unbounded. {If further {$\Lambda_b$} is connected, one can show that}
\begin{equation} \label{eq:Lambda_b2}
\bigcap_{D \in \bar{\mathcal{D}},\: j\in J^D_b} (I^D_j\cap \Lambda)=\Lambda_b,
\end{equation}
{holds. Indeed, connectedness of $\Lambda_b$ is correct, see Remark~\ref{rem:Lambda_connected} below.}
\subsection{The algorithm}
\label{subsect:algorithm}
The aim of the parametrized simplex algorithm is to visit a set of dictionaries $\bar{\mathcal{D}}$ satisfying~\eqref{eq:Lambda_b1}. 

In order to explain the algorithm we introduce the following definition.

\begin{definition}\label{def:boundary}
$D\in \mathcal{D}$ is said to be a \emph{boundary dictionary} if $\Lambda^D$ and an index set of entering variables $J^D$ is known. A boundary dictionary is said to be \emph{visited} if the resulting dictionaries of all possible pivots from $D$ are boundary and the index set $J^D_b$ corresponding to $J^D$ (see Remark~\ref{rem:extremedir}) is known.
\end{definition}

The motivation behind this definition is to treat the {dictionaries as nodes and the possible pivots between dictionaries as the edges of a graph. Note that more than one dictionary may correspond to the same maximizer.} 

\begin{remark}\label{rem:Lambda_connected}
{The graph described above is not necessarily connected. However, there exists a connected subgraph which includes at least one dictionary corresponding to each maximizer found by visiting the whole graph. The proof for the case $C = \R^q_+$ is given in \cite{Steuer_connectedness} and it can be generalized easily to any polyhedral ordering cone. Note that this implies that the set $\Lambda_b$ is connected.}
\end{remark}

The idea behind the algorithm is to visit a sufficient subset of 'nodes' to cover the set $\Lambda_b$. This can be seen as a special online traveling salesman problem. Indeed, we employ the terminology used in \cite{online_tsp}. The set of all 'currently' boundary and visited dictionaries through the algorithm are denoted by $BD$ and $VS$, respectively. 

The algorithm starts with $BD =\{D^0\}$ and $VS=\emptyset$, where $D^0$ is the initial dictionary with index set of entering variables $J^{D^0}$. We initialize $\bar{\mathcal{X}}^h$ as the empty set and $\bar{\mathcal{X}}$ as $\{x^0\}$, where $x^0$ is the basic solution corresponding to $D^0$. Also, as there {are} no explored directions for $D^0$ we set $E^{D^0} = \emptyset$.

For a boundary dictionary $D$, we consider each $j \in J^D$ and check the leaving variable corresponding to $x_j$. If there is no leaving variable, we add $x^h$ defined by $x^h_{\mathcal{B}} = -B^{-1}N{e}^j$, and $x^h_{\mathcal{N}} = {e}^j$ to the set $\bar{\mathcal{X}}^h$, see Proposition~\ref{prop:extremedir}. Otherwise, a corresponding leaving variable $x_i$ is found. If $(j,i)\notin E^D$, we perform the pivot $x_j \leftrightarrow x_i$ as it {has not been} explored before. We check if the resulting dictionary $\bar{D}$ is marked as visited or boundary. If $\bar{D} \in VS$, there is no need to consider $\bar{D}$ further. If $\bar{D} \in BD$, then $(i,j)$ is added to the set of explored directions for $\bar{D}$. In both cases, we continue by checking some other entering variable of $D$. If $\bar{D}$ is neither visited nor boundary, then the corresponding basic solution $\bar{x}$ is added to the set $\bar{\mathcal{X}}$, an index set of entering variables $J^{\bar{D}}$ is computed, $(i,j)$ is added to the set of explored directions $E^{\bar{D}}$, and $\bar{D}$ itself is added to the set of boundary dictionaries. Whenever all $j \in J^D$ have been considered, $D$ becomes visited. Thus, $D$ is deleted from the set $BD$ and added to the set $VS$. The algorithm stops when there are no more boundary dictionaries. %As an optional last step, redundant maximizers could be eliminated from the sets $\bar{\mathcal{X}}$ and $\bar{\mathcal{X}}^h$, e.g. by performing a vertex enumeration. This step is optional as the set $(\bar{\mathcal{X}},\bar{\mathcal{X}}^h)$ itself is already a solution. The additional vertex enumeration would only provide a solution with possibly fewer elements at an additional cost.

\begin{algorithm}
\caption{Parametric Simplex Algorithm for LVOP}
\begin{algorithmic}[1] \label{alg_1}
%\STATE Fix some $w^0 \in C^+$ such that $c^Tw^0 = 1$ and P$_1(w^0)$ has an optimal solution;
%\STATE Solve P$_1(w^0)$, let $x^0$ be the optimal solution and $\mathcal{N}^0$ be the nonbasic variables for the optimal dictionary $D^0$;
%\STATE Consider dictionary $D^0$ for the parametrized problem~$(\text{P}_{\lambda})$;
\STATE Find $D^0$ and an index set of entering variables $J^{D^0}$;
\STATE Initialize $\left\{ \begin{array}{l}
BD = \{D^0\}, \bar{\mathcal{X}} = \{x^0\};\\
VS , \bar{\mathcal{X}}^h, E^{D^0}, R =\emptyset;
\end{array} \right.$
\WHILE{$BD \neq \emptyset$}
	\STATE Let $D \in BD$ with nonbasic variables $\mathcal{N}$ and index set of entering variables $J^D$;  
	\FOR{$j \in J^D$}
		\STATE Let $x_j$ be the entering variable;
		\IF{$B^{-1}Ne^j \leq 0$}
			\STATE Let $x^h$ be such that $x^h_{\mathcal{B}}=-B^{-1}N{e}^j$ and $x^h_{\mathcal{N}}={e}^j$;
			\STATE $\bar{\mathcal{X}}^h \gets \bar{\mathcal{X}}^h\cup \{x^h\}$;
			\STATE $P^T[\bar{\mathcal{X}}^h] \gets P^T[\bar{\mathcal{X}}^h] \cup \{-{Z}_{\mathcal{N}}^T{e}^j\}$
%			\STATE $J^D_b \gets J^D_b \cup \{j\}$;
		\ELSE 
			\STATE Pick $i \in \argmin_{i\in \mathcal{B},\: (B^{-1}N)_{ij} >0} \frac{(B^{-1}b)_i}{(B^{-1}N)_{ij}}$; 
			\IF{$(j,i) \notin E^D$}
				\STATE Perform the pivot with entering variable $x_j$ and leaving variable $x_i$; 
				\STATE Call the new dictionary $\bar{D}$ with nonbasic variables $\bar{\mathcal{N}} = \mathcal{N} \cup \{i\} \setminus \{j\}$;
				\IF{$\bar{D}\notin VS$}
					\IF{$\bar{D}\in BD$}
						\STATE $E^{\bar{D}} \gets E^{\bar{D}} \cup \{(i,j)\}$;
					\ELSE
						\STATE Let $\bar{x}$ be the basic solution for $\bar{D}$;
						\STATE $\bar{\mathcal{X}}\gets \bar{\mathcal{X}} \cup \{\bar{x}\}$; 
						\STATE {$P^T[\bar{\mathcal{X}}] \gets P^T[\bar{\mathcal{X}}] \cup \{P^T\bar{x}\}$;}
						\STATE Compute an index set of entering variables $J^{\bar{D}}$ of $\bar{D}$;
						\STATE Let $E^{\bar{D}} = \{(i,j)\}$;
						\STATE $BD \gets BD \cup \{\bar{D}\}$;
					\ENDIF
				\ENDIF
			\ENDIF
		\ENDIF
	\ENDFOR
	\STATE $VS \gets VS \cup \{D\}$, ~~~ $ BD \gets BD \setminus \{D\}$;
\ENDWHILE
\RETURN $\left\{ \begin{array}{ll}
(\bar{\mathcal{X}}, \bar{\mathcal{X}}^h)&: \text{A finite solution of~} \eqref{P}; \\
(P^T[\bar{\mathcal{X}}],P^T[\bar{\mathcal{X}}^h] \cup {\{y^1,\ldots,y^t\}}) &: \text{V representation of~} \mathcal{P}.\\
%\Gamma(\bar{\mathcal{X}}) &: \text{Vertices of an inner~}\epsilon \text{-approximation of ~} \mathcal{P}.\\
\end{array} \right.$
\end{algorithmic}
\end{algorithm}

%\begin{proposition}\label{prop:finiteiterations}
%Algorithm~\ref{alg_1} terminates in finite number of iterations. 
%\end{proposition}
%\begin{proof}
%There are at most finitely many dictionaries and the 'already explored pivots' are listed. Thus, there is no risk if cycling, and the algorithm stops after finitely many iterations.
%\end{proof}
%\begin{remark}\label{rem:partition}
%Let $VS$ be the set of visited dictionaries found by Algorithm~\ref{alg_1} at termination. By construction, we have
%\begin{equation}
%\Lambda_b = \bigcup_{D \in VS}(\Lambda^D\cap\Lambda).
%\end{equation} Moreover, by Remark~\ref{rem:extremedir}, we can write $\Lambda_b$ also as follows:
%\begin{equation}\label{eq:Lambda_b}
%\Lambda_b = \bigcap_{D \in VS,\: j\in J^D_b} (I^D_j\cap \Lambda).
%\end{equation}
%\begin{align*}
%\Lambda_b &= \bigcap_{D \in VS,\: j\in J^D_b} (I^D_j\cap \Lambda)\\
%& = \bigcap_{D \in VS,\: j\in J^D_b} \{\lambda \in \Lambda : w(\lambda)^T{Z}_{\mathcal{N}}^T{e}_j \geq 0 \} \\
%& = \bigcap_{D \in VS,\: j\in J^D_b} \{\lambda \in \R^{q-1} : w(\lambda)^T{Z}_{\mathcal{N}}^T{e}_j \geq 0,\:\: w(\lambda)^Ty_i \geq 0 \text{~for all~} i=1,\ldots,k \}\\
%\end{align*}
%where $\{y_1,\ldots, y_k\}$ is the set of generating vectors for the ordering cone $C$.
%\end{remark}

\begin{theorem}
Algorithm~\ref{alg_1} returns a solution $(\bar{\mathcal{X}},\bar{\mathcal{X}}^h)$ to~\eqref{P}.
\end{theorem}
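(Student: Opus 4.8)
The plan is to verify the three properties that Definition~\ref{def:solution} requires of a solution: finiteness of the procedure (so that $\bar{\mathcal{X}}$ and $\bar{\mathcal{X}}^h$ are finite), the fact that every element collected is a genuine maximizer (not merely a weak one), and the defining equation~\eqref{eq:solution}. The first two are essentially assembled from results already established earlier in the excerpt, while the third is where the real work lies.

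\begin{proof}
First I would argue finiteness. By Remark~\ref{rem:wfrominterior}b, every dictionary the algorithm visits lies in the finite collection $\mathcal{D}$ of primal feasible dictionaries $D$ with $\Lambda^D \cap \Int\Lambda \neq \emptyset$; since each such dictionary is added to $BD$ at most once (the membership tests $\bar{D}\notin VS$ and $\bar{D}\in BD$ prevent re-insertion) and is processed in finitely many steps (the loop over the finite set $J^D$), the algorithm terminates after finitely many pivots. Consequently $\bar{\mathcal{X}}$ and $\bar{\mathcal{X}}^h$ are finite. The explored-pivot bookkeeping via $E^D$ guarantees no pivot is repeated, which rules out cycling.

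Next I would show $(\bar{\mathcal{X}},\bar{\mathcal{X}}^h)$ consists only of maximizers. For the points: each basic solution added to $\bar{\mathcal{X}}$ comes from a dictionary $D\in\mathcal{D}$, and by Remark~\ref{rem:wfrominterior}b the selection rule for entering variables~\eqref{eq:definingineq} guarantees $\Lambda^D\cap\Int\Lambda\neq\emptyset$, so one may pick $\lambda\in\Lambda^D\cap\Int\Lambda$ with $w(\lambda)\in\ri W\subseteq\Int C^+$; Proposition~\ref{prop:scalarization} then upgrades the basic solution from a weak maximizer to a maximizer. For the directions: each $x^h$ added to $\bar{\mathcal{X}}^h$ arises from some $j\in J^D_b$, and Proposition~\ref{prop:extremedir} shows directly that $x^h$ is a maximizer of~\eqref{P} with $P^Tx^h = -Z_{\mathcal{N}}^Te^j$.

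The main step is establishing~\eqref{eq:solution}, i.e.\ $\conv P^T[\bar{\mathcal{X}}] + \cone P^T[\bar{\mathcal{X}}^h] - C = \mathcal{P}$. The inclusion $\subseteq$ is immediate since each generator lies in $\mathcal{P}$ and $\mathcal{P}$ is convex and closed under $-C$. For $\supseteq$ the strategy is to pass through the weighted-sum scalarizations and use that the algorithm partitions $\Lambda$. On the terminating run, the visited dictionaries form a subcollection $\bar{\mathcal{D}}$ with $\bigcup_{D\in\bar{\mathcal{D}}}(\Lambda^D\cap\Lambda)=\Lambda_b$ as in~\eqref{eq:Lambda_b1}: since the algorithm stops only when $BD=\emptyset$, every defining half-space $I^D_j$ for $j\in J^D$ of a visited dictionary has either been crossed by a pivot into another visited/boundary dictionary or corresponds to an unbounded direction $j\in J^D_b$, so no optimality region adjacent to $\Lambda_b$ is left uncovered. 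Given a point $p\in\mathcal{P}$, I would fix $c\in\Int C$ and a weight $w\in W$; if $w(\lambda)\in-\mathcal{P}_\infty^+$ then by Proposition~\ref{prop:recessioncone} the scalarization $(\text{P}_\lambda)$ is bounded, $\lambda\in\Lambda_b$, hence $\lambda\in\Lambda^D\cap\Lambda$ for some $D\in\bar{\mathcal{D}}$, and the corresponding basic solution $x^D\in\bar{\mathcal{X}}$ maximizes $w(\lambda)^TP^Tx$ over $\mathcal{X}$; thus the supporting half-space of $\mathcal{P}$ in direction $w(\lambda)$ is attained by a generator in $\conv P^T[\bar{\mathcal{X}}]-C$. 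For the remaining directions $w\notin-\mathcal{P}_\infty^+$, the recession behaviour is captured by $\bar{\mathcal{X}}^h$: by~\eqref{eq:Lambda_b2} (valid since $\Lambda_b$ is connected by Remark~\ref{rem:Lambda_connected}) the half-spaces $I^D_j$, $j\in J^D_b$, cut out exactly $\Lambda_b$, so the directions $-Z_{\mathcal{N}}^Te^j$ together with $-C$ generate the recession cone $\mathcal{P}_\infty$. Combining the two cases, every supporting hyperplane of the closed convex set $\mathcal{P}$ is generated by $(\bar{\mathcal{X}},\bar{\mathcal{X}}^h)$, and since both sides of~\eqref{eq:solution} are closed convex sets with the same support function they coincide.

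The hard part will be the last paragraph: making rigorous that the visited dictionaries genuinely cover all of $\Lambda_b$ (that the stopping criterion leaves no gap), and that the collected directions account for precisely the recession cone. The coverage argument relies crucially on the connectedness of $\Lambda_b$ from Remark~\ref{rem:Lambda_connected} together with the neighbouring-region identity $\Lambda^{\bar{D}}\cap\Lambda^D\subseteq\{\lambda : w(\lambda)^TZ_{\mathcal{N}}e^j=0\}$; one must check that starting from $D^0$ and exhausting all non-redundant half-spaces indeed traverses the whole connected region rather than stalling at a spurious boundary. Translating the support-function equality back into the set equality~\eqref{eq:solution} is then a routine appeal to the fact that a closed convex set equals the intersection of its supporting half-spaces.
\end{proof}
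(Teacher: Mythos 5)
Your proposal is correct and follows essentially the same route as the paper's proof: finiteness from the finite number of dictionaries plus the explored-pivot bookkeeping, the maximizer property from Propositions~\ref{prop:scalarization} and~\ref{prop:extremedir} with Remark~\ref{rem:wfrominterior}b, coverage of $\Lambda_b$ via~\eqref{eq:Lambda_b1} and connectedness (Remark~\ref{rem:Lambda_connected}), and the recession-cone identity $\cone P^T[\bar{\mathcal{X}}^h] - C = \mathcal{P}_\infty$ via~\eqref{eq:Lambda_b2} and Proposition~\ref{prop:recessioncone}. Your support-function packaging of the final set equality is just an explicit rendering of what the paper treats as immediate once those two facts are in hand; the paper fills in the one step you flag as "the hard part" by computing the dual cone $R^+$ explicitly and verifying $-\mathcal{P}_\infty^+ = -R^+$.
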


\begin{proof}
Algorithm~\ref{alg_1} terminates in a finite number of iterations since the overall number of dictionaries is finite and there is no risk of cycling as the algorithm never performs 'already explored pivots', see line~$13$. $\bar{\mathcal{X}},\bar{\mathcal{X}}^h$ are finite sets of feasible points and directions, respectively, for~\eqref{P}, and they consist of only maximizers by Propositions~\ref{prop:scalarization} and~\ref{prop:extremedir} together with Remark~\ref{rem:wfrominterior} b. Hence, it is enough to show that $(\bar{\mathcal{X}},\bar{\mathcal{X}}^h)$ satisfies~\eqref{eq:solution}.

Observe that by construction, the set of all visited dictionaries $\bar{\mathcal{D}}:=VS$ at termination satisfies~\eqref{eq:Lambda_b1}. Indeed, there are finitely many dictionaries and $\Lambda_b$ is a connected set, see Remark~\ref{rem:Lambda_connected}. It is guaranteed by~\eqref{eq:Lambda_b1} that for any $w \in C^+$, for which~\eqref{P1} is bounded, there exists an optimal solution $\bar{x}\in\bar{\mathcal{X}}$ of~\eqref{P1}. Then, it is clear that $(\bar{\mathcal{X}},\bar{\mathcal{X}}^h)$ satisfies~\eqref{eq:solution} as long as $R := \cone P^T[\bar{\mathcal{X}}^h] - C$ is the recession cone $\mathcal{P}_{\infty}$ of the lower image. 

If for all $D\in\bar{\mathcal{D}}$ the set $J^D_b=\emptyset$, then \eqref{P} is bounded, $\bar{\mathcal{X}}^h=\emptyset$, and trivially $R =- C=\mathcal{P}_{\infty}$. For the general case, we show that $-\mathcal{P}_{\infty}^+ = -R^+$ which implies $\mathcal{P}_{\infty} = \cone P^T[\bar{\mathcal{X}}^h] - C$. Assume there is at least one dictionary $D\in\bar{\mathcal{D}}$ with $J^D_b\neq\emptyset$. Then, by Remarks~\ref{rem:extremedir} and~\ref{rem:Lambda_connected}, \eqref{P} is unbounded, $\bar{\mathcal{X}}^h\neq\emptyset$ and $\bar{\mathcal{D}}$ also satisfies~\eqref{eq:Lambda_b2}. On the one hand, by definition of $I^D_j$, we can write~\eqref{eq:Lambda_b2} as
\begin{equation}\label{eq:Lambda_b2_1}
\Lambda_b = \bigcap_{D \in VS,\: j\in J^D_b} \{\lambda \in \Lambda \st w(\lambda)^T{Z}_{\mathcal{N}_D}^T{e}^j \geq 0 \},
%& = \bigcap_{D \in VS,\: j\in J^D_b} \{\lambda \in \R^{q-1} : w(\lambda)^T{Z}_{\mathcal{N}_D}^T{e}_j \geq 0\} \cap \bigcap_{i=1}^k\{\lambda \in \R^{q-1}: w(\lambda)^Ty_i \geq 0 \}\\
\end{equation}
where %$\{y_1,\ldots, y_k\}$ is the set of generating vectors for the ordering cone $C$, and 
$\mathcal{N}_D$ is the set of nonbasic variables corresponding to dictionary $D$. On the other hand, by construction and by Proposition~\ref{prop:extremedir}, we have
\begin{equation*}
R =  \cone (\{-{Z}_{\mathcal{N}_D}^T{e}^j\st j \in \bigcup_{D \in VS} J^D_b \} \cup \{-y^1,\ldots,-y^t\}),
\end{equation*}
where $\{y^1,\ldots, y^t\}$ is the set of generating vectors for the ordering cone $C$. The dual cone can be written as 
\begin{equation}\label{eq:R+}
R^+ = \bigcap_{D \in VS,\: j\in J^D_b}\{w \in \R^q \st w^T{Z}_{\mathcal{N}_D}^T{e}^j \leq 0\} \cap \bigcap_{i=1}^k\{w \in \R^q \st w^Ty^i \leq 0\}.
\end{equation}

Now, let $w \in -\mathcal{P}_{\infty}^+$. By proposition~\ref{prop:recessioncone},~\eqref{P1} has an optimal solution. As $c^Tw >0$, also $\left(P_1(\frac{w}{c^Tw})\right)=(P_{\bar{\lambda}})$ has an optimal solution, where $\bar{\lambda}:=\frac{1}{c^Tw}(w_1,\ldots,w_{q-1})^T$ and thus $w(\bar{\lambda}) = \frac{w}{c^Tw}$. By the definition of $\Lambda_b$ given by~\eqref{eq:Lambda_b}, $\bar{\lambda}\in\Lambda_b$. Then, by~\eqref{eq:Lambda_b2_1}, $\bar{\lambda}\in \Lambda$ and $w(\lambda)^T{Z}_{\mathcal{N}_D}^T{e}^j \geq 0$ for all $j\in J^D_b, D \in VS$. This holds if and only if $w \in -R_+$ by definition of $\Lambda$ and by~\eqref{eq:R+}. The other inclusion can be shown symmetrically.
\end{proof}
{\begin{remark}
	\label{rem:degeneracy}
	{In general, simplex-type} algorithms are known to work better if the problem is not degenerate. If the problem is degenerate, Algorithm~\ref{alg_1} may find redundant maximizers. The effects of degeneracy will be provided in more detail in Section~\ref{sect:results}. For now, let us mention that it is possible to eliminate the redundancies by additional steps in Algorithm~\ref{alg_1}. There are two types of redundant maximizers.
	\begin{enumerate}[a.]
\item Algorithm~\ref{alg_1} may find multiple point (direction) maximizers that are mapped to the same point (direction) in the image space. In order to find a solution that is free of these type of redundant maximizers, one may change line~$21$~($9$) of the algorithm such that the current maximizer $x$ ($x^h$) is added to the set $\bar{\mathcal{X}}$ ($\bar{\mathcal{X}}^h$) only if its image is not in the current set $P^T[\bar{\mathcal{X}}]$~($P^T[\bar{\mathcal{X}^h}]$). 
\item {Algorithm~\ref{alg_1} may also find maximizers whose image is not a vertex on the lower image. One can eliminate these maximizers from the set $\bar{\mathcal{X}}$ by performing a vertex elimination at the end.} 
\end{enumerate}
\end{remark} }

\subsection{Initialization}
\label{subsect:initialization}
There are different ways to initialize Algorithm~\ref{alg_1}. We provide two methods, both of which also determine if the problem has no solution. Note that~\eqref{P} has no solution if $\mathcal{X}=\emptyset$ or if the lower image {is equal to $\R^q$}, that is, if~\eqref{P1} is unbounded for all $w \in \Int C^+$. We {assume} $\mathcal{X}$ is nonempty. Moreover, for the purpose of this section, we assume without loss of generality that $b\geq 0$.  Indeed, if $b\ngeq0$, one can find a primal feasible dictionary by applying any 'Phase 1' algorithm that {is} available for the usual simplex method, see~\cite{vanderbei}. 

The first initialization method finds a weight vector $w^0 \in \Int C^+$ such that~(P$_1(w^0)$) has an optimal solution. Then the optimal dictionary for~(P$_1(w^0)$) is used to construct the initial dictionary $D^0$. There are different ways to choose the weight vector $w^0$. The second method of initialization can be thought of as a Phase 1 algorithm. It finds an initial dictionary as long as there exists one. 

\subsubsection{Finding $w^0$ and constructing $D^0$} 
\label{subsubsect:w0}
The first way to initialize the algorithm requires finding some $w^0 \in \Int C^+$ such that~(P$_1(w^0)$) has an optimal solution. %\textcolor{blue}{The existence of such $w^0$ is guaranteed by Assumption~\ref{assumption1}.} %The existence of such $w^0$ is guaranteed by Assumption~\ref{assumption1}. 
It is clear that if the problem is known to be bounded, then any $w^0 \in \Int C^+$ works. However, it is a nontrivial procedure in general. In the following we give two different methods to find such $w^0$. {The first method can also determine if the problem has no solution.}

\begin{enumerate}[a.]
\item The first approach is to extend the idea presented {in~\cite{steuer73}} to any solid polyhedral pointed ordering cone $C$. Accordingly, finding $w^0$ involves solving the following linear program:
\begin{align*} \label{initLP}
\text{~minimize~~~~~~~~} & b^Tu \tag{P$_0$}\\
\text{subject to~~~~~~~~}  & A^Tu - Pw \geq 0, \\
\:\: & Y^T(w - c) \geq 0, \\
\:\: & u \geq 0, 
\end{align*}
where $c \in \Int C^+$, and the columns of $Y$ are the generating vectors of $C$.  Under the assumption $b \geq 0$, it is easy to show that~\eqref{P} has a maximizer if and only if~\eqref{initLP} has an optimal solution. Note that~\eqref{initLP} is bounded. If~\eqref{initLP} is infeasible, then we conclude that the lower image has no vertex and~\eqref{P} has no solution. In case it has an optimal solution $(u^*, w^*)$, then one can take $w^0 = \frac{w^*}{c^Tw^*} \in \Int C^+$, and solve~(P$_1(w^0)$) optimally. For the randomly generated examples of Section~\ref{sect:examples} we have used this method.
\item Using the idea provided in~\cite{parametric}, it might be possible to initialize the algorithm without even solving a linear program. By the structure of a particular problem, one may start with a dictionary which is trivially optimal for some weight $w^0$. In this case, one can start with this choice of $w^0$, and get the initial dictionary $D^0$ even without solving an LP. An example is provided in Section~\ref{sect:examples}, see Remark~\ref{rem:init_ex}.
%\begin{proof}
%\eqref{P} has a maximizer if and only if there exists $w \in \Int C^+$ such that~\eqref{P1} has an optimal solution. By strong duality this holds if and only if $(D_1(w)) \:\: \min\{b^Tu: A^Tu - Pw \geq 0, u \geq 0\}$ has optimal solution. We will show that~\eqref{initLP} has optimal solution if and only if there exists $w \in \Int C^+$ such that $(D_1(w))$ has an optimal solution. 
%First, let $(u^*, w^*)$ be an optimal solution for~\eqref{initLP}. Note that $w^* \in \Int C^+$ and $u^*$ is feasible for~$(D_1(w^*))$. As $(D_1(w^*))$ is bounded, we conclude that there exists an optimal solution to it. (It is bounded since the primal problem is assumed to be feasible.) 
%On the other hand, let $w^* \in \Int C^+$ and $u^*$ be optimal for $(D_1(w^*))$. If $w^*$ satisfies $Y^T(w^*-c)\geq 0$, then $(u^*, w^*)$ is feasible for~\eqref{initLP}. Since it is bounded, there exists an optimal solution. If $Y^T(w^*-c) \ngeq 0$, then there exist $\gamma >0$ such that $Y^T (\gamma w^* - c) \geq 0$. Then, $\gamma u^*$ is feasible for $D_1(\gamma w^*)$. Then, as the problem is bounded there exists an optimal solution.
%\end{proof}
	 
\end{enumerate}
In order to initialize the algorithm, $w^0$ can be used to construct the initial dictionary $D^0$. Without loss of generality assume that $c^Tw^0 = 1$, indeed one can always normalize since $c \in \Int C$ implies $c^Tw^0 > 0$. Then, clearly $w^0 \in \ri W$. Let $\mathcal{B}^0$ and $\mathcal{N}^0$ be the set of basic and nonbasic variables corresponding to the optimal dictionary $D^{*}$ of~(P$_1(w^0)$). If one considers the dictionary $D^0$ for~(P$_{\lambda}$) with the basic variables $\mathcal{B}^0$ and nonbasic variables $\mathcal{N}^0$, the objective function of $D^0$ will be different from $D^{*}$ as it depends on the parameter $\lambda$. However, the matrices $B^0$, $N^0$, and hence the corresponding basic solution $x^0$, are the same in both dictionaries. We consider $D^0$ as the initial dictionary for the parametrized problem~$(\text{P}_{\lambda})$. Note that $B^0$ is a nonsingular matrix as it corresponds to dictionary $D^{*}$. Moreover, since $D^{*}$ is an optimal dictionary for~(P$_1(w^0)$), $x^0$ is clearly primal feasible for~$(\text{P}_{\lambda})$ for any $\lambda \in \R^{q-1}$. Furthermore, the optimality region of $D^0$ satisfies $\Lambda^{D^0} \cap \Int \Lambda \neq \emptyset$ as $\lambda^0 := [w^0_1,\ldots, w^0_{q-1}] \in \Lambda^{D^0} \cap \Int \Lambda$. Thus, $x^0$ is also dual feasible for~$(\text{P}_{\lambda})$ for $\lambda \in \Lambda^{D^0}$, and $x^0$ is a maximizer to~\eqref{P}.

%\notizb{Second approach to find $w^0$ is to partition $\Lambda$ into finitely many regions, to fix $\lambda \in \Int \Lambda$ from each region, and to check if ~(P$_1(w(\lambda))$) can be solved optimally for each fixed $\lambda$ until finding one. Then, $w^0 := w(\lambda)$ is the required initial weight vector. Finding $w^0$ has higher probability if one partitions into smaller regions. However, it would be more costly to check all the points until finding one. For the randomly generated examples of Section~\ref{sect:examples} we have used this method which worked sufficiently efficient. One may employ different search algorithms to find an initial weight vector.} 
	
\subsubsection{Perturbation method}
\label{subsubsect:mu}
The second method of initialization works similar to the idea presented for Algorithm~\ref{alg_1} itself. 

Assuming that $b\geq 0$, problem~(P$_{\lambda}$) is perturbed by an additional parameter $\mu\in \R$ as follows: 

\begin{align*} \label{Pmu}
\text{~maximize~~~~~~~~} &(w(\lambda)^TP^T - \mu \mathbf{1}^T)x \tag{P$_{\lambda, \mu}$}\\
\text{subject to~~~~~~~~}  &Ax \leq b, 
\\\:\: &x \geq 0, 
\end{align*}
where \textbf{1} is the vector of ones. After introducing the slack variables, consider the dictionary with basic variables $x_{n+1}, \ldots, x_{m+n}$ and with nonbasic variables $x_1, \ldots, x_n$. This dictionary is primal feasible as $b \geq 0$. Moreover, it is dual feasible if {$Pw(\lambda)-\mu \mathbf{1} \leq 0$}. We introduce the optimality region of this dictionary as 
$$M^0:=\{(\lambda,\mu)\in \Lambda \times \R_+ \st Pw(\lambda)-\mu \mathbf{1} \leq 0\}.$$ Note that $M^0$ is not empty as $\mu$ can take sufficiently large values. 

The aim of the perturbation method is to find an optimality region $M$ such that 
\begin{equation}
\label{eq:M_condition}
M \cap \ri (\Lambda \times \{0\}) \neq \emptyset,
\end{equation} where $\Lambda \times \{0\}:=\{(\lambda,0) \st \lambda \in \Lambda\}$. If the current dictionary satisfies~\eqref{eq:M_condition}, then it can be taken as an initial dictionary $D^0$ for Algorithm~\ref{alg_1} after deleting the parameter $\mu$. Otherwise, the defining inequalities of the optimality region are found. Clearly, they correspond to the entering variables of the current dictionary. The search for an initial dictionary continues similar to the original algorithm. Note that if there does not exist a leaving variable for an entering variable, (P$_{\lambda, \mu}$) is found to be unbounded for some set of parameters. The algorithm continues until we obtain a dictionary for which the optimality region satisfies~\eqref{eq:M_condition} or until we cover the the parameter set  $\Lambda \times \R_+$ by the optimality regions and by the regions that are known to yield unbounded problems. At termination, if there exist no dictionary that satisfies~\eqref{eq:M_condition}, then we conclude that there is no solution to problem~\eqref{P}. Otherwise, we initialize the algorithm with $D^0$. See {Example}~\ref{ex:1}, Remark~\ref{rem:init_ex_perturb}.

\section{Illustrative examples}
\label{sect:examples}
We provide some examples and numerical results in this section. The first example illustrates how the different methods of initialization and the algorithm work. The second example shows that {Algorithm~\ref{alg_1}} can find a solution even though the lower image does not have any vertices.

\begin{example}
\label{ex:1}
Consider the following problem
\begin{alignat*}1
\text{maximize}  &\quad (x_1,x_2-x_3,x_3)^T \text{~~with respect to ~} \leq_{\R^3_+}\\
\text{subject to}&\quad x_1 + x_2 \:\:\:\:\:\:\:\:\:\:\:\:  \leq 5 \\
                 &\quad x_1 +2 x_2 - x_3 \leq 9\\
                 &\quad  \quad \quad x_1, x_2, x_3 \geq 0.
\end{alignat*}
Let $c = (1,1,1)^T \in \Int \R^3_+$. Clearly, we have $\Lambda = \{\lambda\in\R^2 \st \lambda_1 + \lambda_2 \leq 1, \: \lambda_i \geq 0, \:\: i=1,2\}.$ 

Let us illustrate the different initialization methods. 

\begin{remark}{{\bf (Initializing by solving~$\mathbf{(\text{P}_0)}$, see Section~\ref{subsubsect:w0} a.)}}
%\noindent{\bf a) Solving~\eqref{initLP}, see Section~\ref{subsubsect:w0}:} 
A solution of~\eqref{initLP} is found as $w^* = (1,1,1)^T$. Then, we take $w^0 = (\frac{1}{3}, \frac{1}{3}, \frac{1}{3})^T$ as the initial weight vector. $x^0 = (5, 0, 0)^T$ is an optimal solution found for P$_1(w^0)$. The indices of the basic variables of the corresponding optimal dictionary are $\mathcal{B}^0 = \{1,5\}$. We form dictionary $D^0$ of problem~$(\text{P}_{\lambda})$ with basic variables $\mathcal{B}^0$: 
{\[\begin{array}{ccccc}%
\xi  ~ =& 5\lambda_1  &-\lambda_1x_4  &-(\lambda_1-\lambda_2) x_2 &-(\lambda_1 + 2\lambda_2-1) x_3   \\
x_1   =& ~5  &-x_4  &- x_2  &  \\
x_5   =& ~4  &+x_4  &- x_2 & + x_3\\
\end{array}\]}
\end{remark}
%\notizb{Following remarks illustrate alternative ways to initialize the algorithm for this problem.}
\begin{remark}{{\bf (Initializing using the structure of the problem, see Section~\ref{subsubsect:w0} b.)}}
\label{rem:init_ex}
The structure of Example~\ref{ex:1} allows us to initialize without solving a linear program. %, see Section~\ref{subsubsect:w0}.
Consider $w^0 = (1,0,0)^T$. As the objective of~P$_1(w^0)$ is to maximize $x_1$ and the most restraining constraint is $x_1 + x_2 \leq 5$ together with $x_i \geq 0$, $x = (5,0,0)^T$ is an optimal solution of P$_1(w^0)$. The corresponding slack variables are $x_4 = 0$ and $x_5 = 4$. Note that this corresponds to the dictionary with basic variables $\{1,5\}$ and nonbasic variables $\{2,3,4\}$, which yields the same initial dictionary $D^0$ as above. Note that one needs to be careful as $w^0 \notin \Int C^+$ but $w^0 \in\bd C^+$. In order to ensure that the corresponding initial solution is a maximizer and not only a weak maximizer, one needs to check the optimality region of the initial dictionary. If the optimality region has a nonempty intersection with $\Int \Lambda$, which is the case for $D^0$, then the corresponding basic solution is a maximizer. In general, if one can find $w \in \Int C^+$ such that~\eqref{P1} has a trivial optimal solution, then the last step is clearly unnecessary. 
\end{remark}

\begin{remark}{{\bf (Initializing using the perturbation method, see Section~\ref{subsubsect:mu}.)}}
\label{rem:init_ex_perturb}
The starting dictionary for (P$_{\lambda,\mu}$) of the perturbation method is given as
	\[\begin{array}{ccccc}%
	\xi   &= ~  &-(\mu - \lambda_1)x_1  &-(\mu  - \lambda_2) x_2 &-(\mu +\lambda_1 + 2\lambda_2-1) x_3   \\
	x_4   &= 5  &-x_1  &- x_2  &  \\
	x_5   &= 9  &-x_1  &- 2x_2 & +x_3 \\
	\end{array}\]
This dictionary is optimal for $M^0 = \{(\lambda, \mu)\in \Lambda \times \R \st \mu - \lambda_1 \geq 0,\: \mu -\lambda_2 \geq 0,\: \mu+\lambda_1+2\lambda_2 \geq 1\}$. Clearly, $M^0$ does not satisfy~\eqref{eq:M_condition}. {The defining halfspaces for $M^0$ correspond to the nonbasic variables $x_1, x_2$ and $x_3$}. If $x_1$ enters, then the leaving variable is $x_4$ and the next dictionary has the optimality region $M^1 = \{(\lambda, \mu)\in \Lambda \times \R \st -\mu + \lambda_1\geq 0,\: \lambda_1 -\lambda_2 \geq 0,\: \mu + \lambda_1+2\lambda_2 \geq 1\}$ which satisfies~\eqref{eq:M_condition}. Then, by deleting $\mu$ the initial dictionary is found to be $D^0$ as above. Different choices of entering variables in the first iteration might yield different initial dictionaries.
\end{remark}

Consider the initial dictionary $D^0$. Clearly, $I^{D^0}_4 = \{\lambda \in \R^2 \st \lambda_1 \geq 0\}$, $I^{D^0}_2 = \{\lambda \in \R^2 \st \lambda_1-\lambda_2 \geq 0\}$, and $I^{D^0}_3 = \{\lambda \in \R^2 \st \lambda_1+2\lambda_2 \geq 1\}$. The defining halfspaces for $\Lambda^{D^0}\cap \Lambda$ correspond to the nonbasic variables $x_2, x_3$, thus we have $J^{D^0} = \{2,3\}$. 

The iteration starts with the only boundary dictionary $D^0$. If $x_2$ is the entering variable, $x_5$ is picked as the leaving variable. The next dictionary, $D^1$, has basic variables $\mathcal{B}^1 = \{1,2\}$, the basic solution $x^1 = (1,4,0)^T$, and a parameter region $\Lambda^{D^1}= \{\lambda\in\R^2 \st 2\lambda_1 - \lambda_2 \geq 0, \: -\lambda_1+\lambda_2 \geq 0, \: 2\lambda_1+\lambda_2 \geq 1\}$. The halfspaces corresponding to the nonbasic variables $x_j$, $j \in J^{D^1} = \{3,4,5\}$ are defining for the optimality region $\Lambda^{D^1}\cap \Lambda$. Moreover, $E^{D^1} = \{(5,2)\}$ is an explored pivot for $D^1$. 

From dictionary $D^0$, for entering variable $x_3$ there is no leaving variable according to the minimum ratio rule \eqref{eqn:leavingindex}. We conclude that problem~$(\text{P}_{\lambda})$ is unbounded for $\lambda \in \R^2$ such that $\lambda_1+2\lambda_2 < 1$. Note that 
\[
B^{-1}N=\left[
\begin{array}
[c]{rrr}%
1 & 1 & 0   \bigskip\\
-1 & 1 & -1 
\end{array}
\right],
\]
and the third column corresponds to the entering variable $x_3$. Thus, $x^h_{\mathcal{B}^0} = (x^h_1,x^h_5)^T = (0,-1)^T$ and $x^h_{\mathcal{N}^0} = (x^h_4,x^h_2,x^h_3)^T = e_3 = (0,0,1)^T$. Thus, we add $x^h = (x^h_1, x^h_2, x^h_3)=(0,0,1)^T$ to the set $\bar{\mathcal{X}}^h$, see Algorithm~\ref{alg_1}, line 12. Also, by Proposition~\ref{prop:extremedir}, $P^Tx^h = (0,-1,1)^T$ is an extreme direction of the lower image $\mathcal{P}$. After the first iteration, we have $VS = \{D^0\}$, and $BD = \{D^1\}$. 

For the second iteration, we consider $D^1 \in BD$. There are three possible pivots with entering variables $x_j$, $j\in J^{D^1}= \{3,4,5\}$. For $x_5$, $x_2$ is found as the leaving variable. As $(5,2) \in E^{D^1}$, the pivot is already explored and not necessary. For $x_3$, the leaving variable is found as $x_1$. The resulting dictionary $D^2$ has basic variables $\mathcal{B}^2 = \{2,3\}$, basic solution $x^2 = (0,5,1)^T$, the optimality region $\Lambda^{D^2}\cap \Lambda = \{\lambda \in \R^2_+ \st 2\lambda_1+\lambda_2 \leq 1,\: 2\lambda_1+3\lambda_2 \leq 2,\: -\lambda_1-2\lambda_2 \leq -1\}$, and the indices of the entering variables $J^{D^2} = \{1,4,5\}$. Moreover, we write $E^{D^2} = \{(1,3)\}$. 

We continue the second iteration by checking the entering variable $x_4$ from $D^1$. The leaving variable is found as $x_1$. This pivot yields a new dictionary $D^3$ with basic variables $\mathcal{B}^3 = \{2,4\}$ and basic solution $x^3 = (0,4.5,0)^T$. The indices of the entering variables are found as $J^{D^3} = \{1,3\}$. Also, we have $E^{D^3} = \{(1,4)\}$. At the end of the second iteration we have $VS = \{D^0,D^1\}$, and $BD = \{D^2,D^3\}$. 

Consider $D^2 \in BD$ for the third iteration. For $x_1$, the leaving variable is $x_3$ and we obtain dictionary $D^1$ which is already visited. For $x_4$, the leaving variable is $x_3$. We obtain the boundary dictionary $D^3$ and update the explored pivots for it as $E^{D^3} = \{(1,4), (3,4)\}$. Finally, for entering variable $x_5$ there is no leaving variable and one finds the same $x^h$ that is already found at the first iteration. At the end of third iteration, $VS = \{D^0, D^1, D^2\}, BD = \{D^3\}$.

For the next iteration, $D^3$ is considered. The pivots for both entering variables {yield} already explored ones. At the end of this iteration there {are} no more boundary dictionaries and the algorithm terminates with $VS = \{D^0, D^1, D^2, D^3\}$. Figure~\ref{figure1} shows the optimality regions after the four iterations. The color blue indicates that the corresponding dictionary is visited, yellow stands for boundary dictionaries and {the} gray region corresponds to the set of parameters for which problem~$(\text{P}_{\lambda})$ is unbounded.

\begin{figure}[ht]
	\hspace{-0.8cm}
	\includegraphics[width=4cm,height=3.5cm]{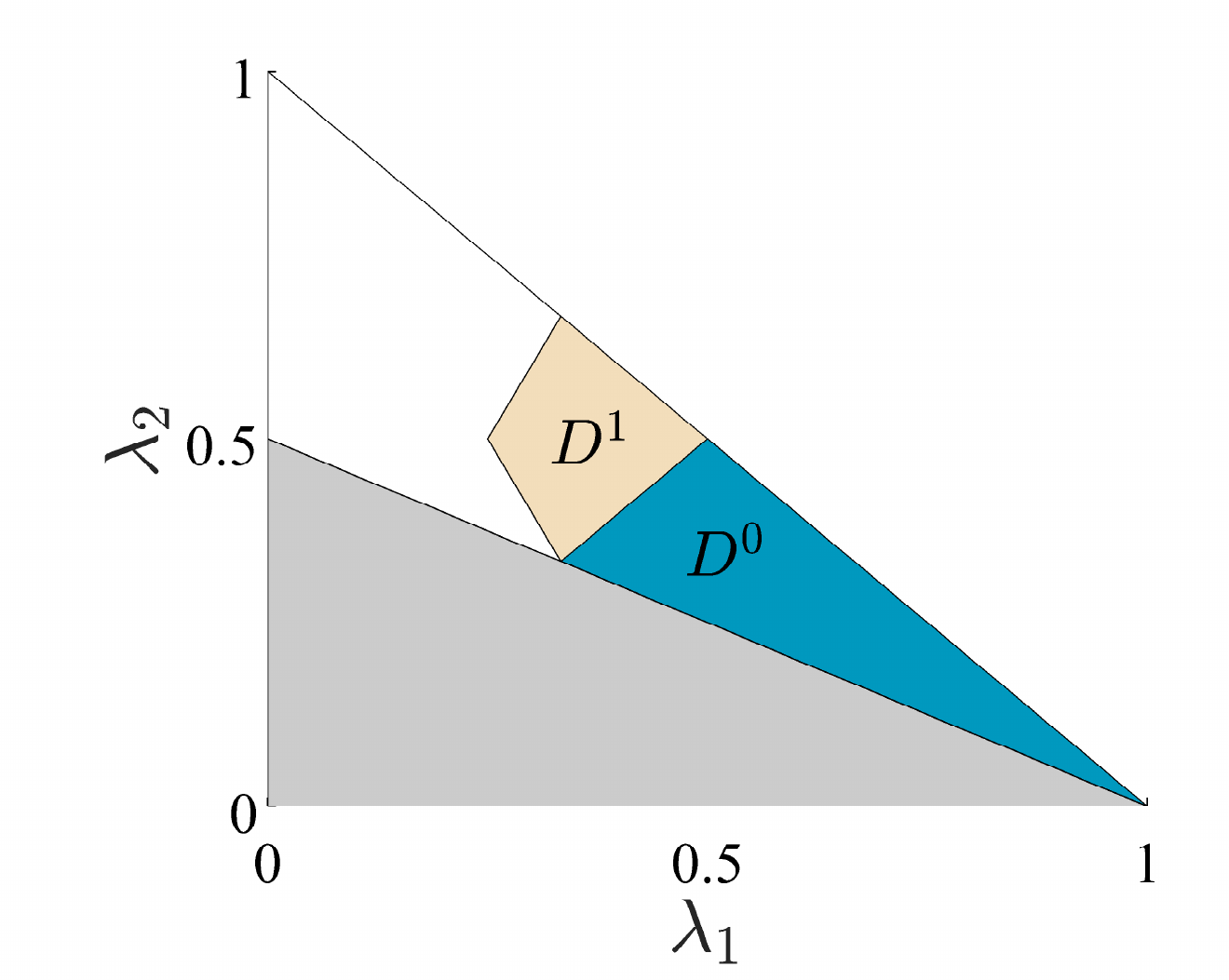}
	\includegraphics[width=4cm,height=3.5cm]{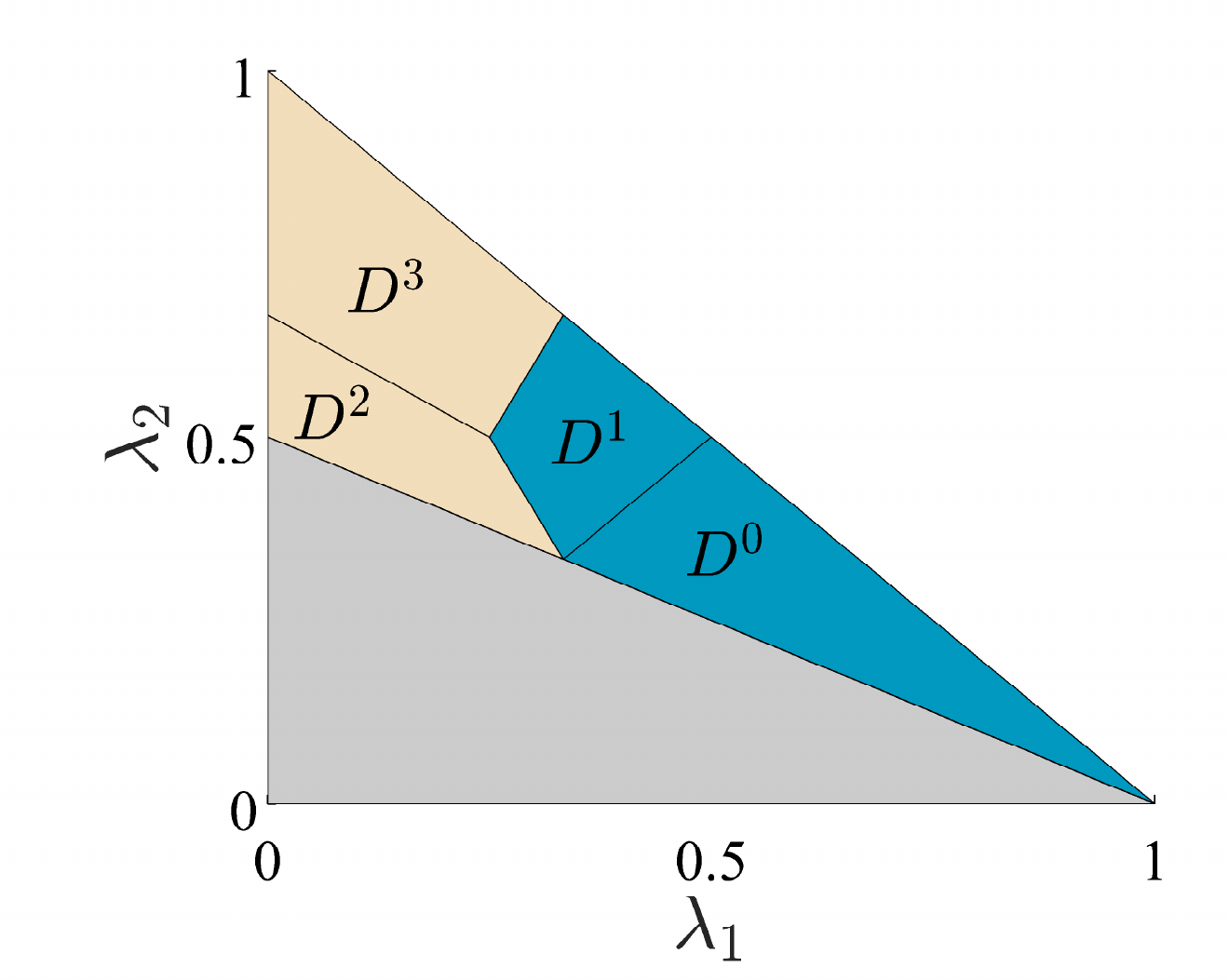}
	%	\vspace{-0.5cm}
	\includegraphics[width=4cm,height=3.5cm]{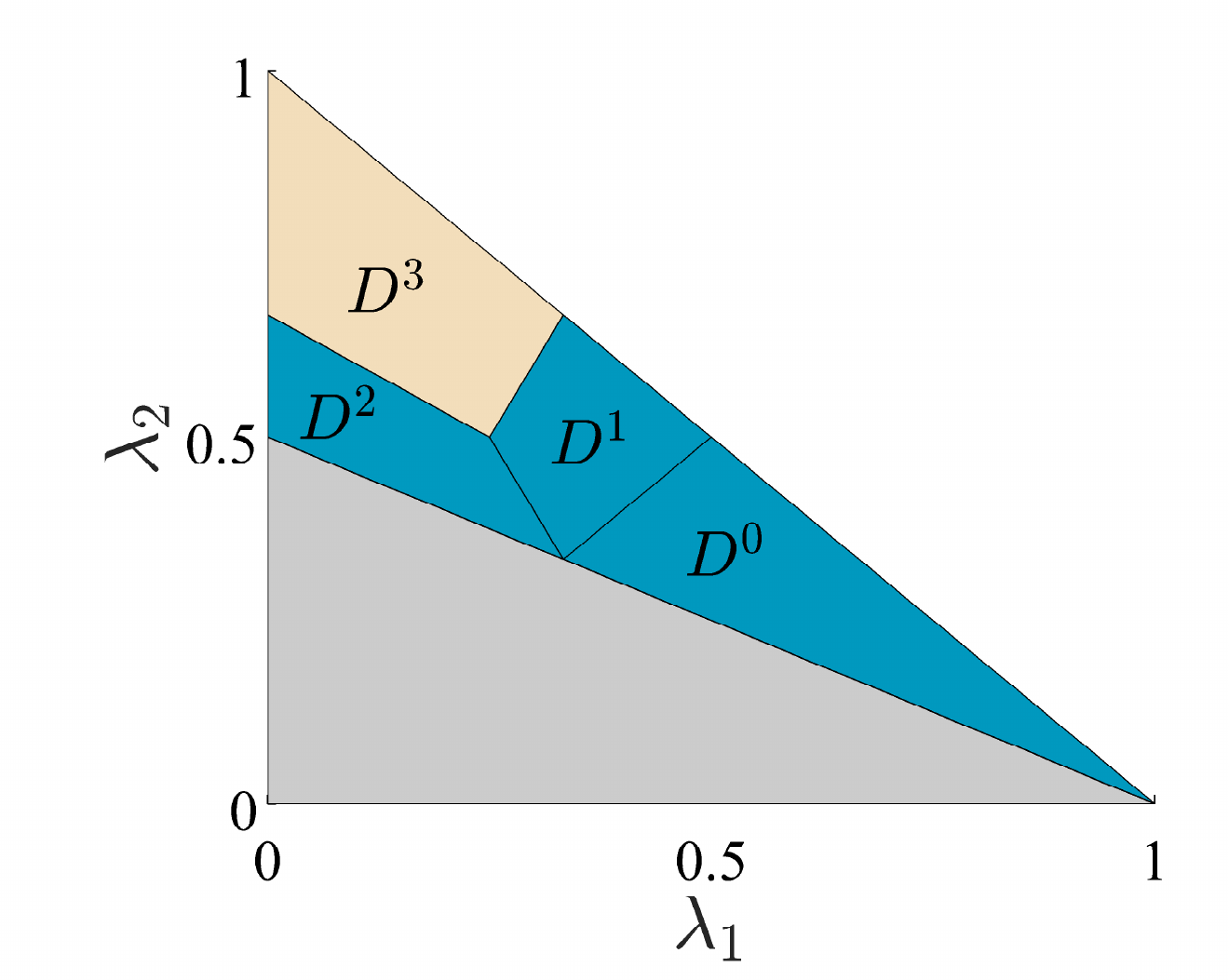}
	\includegraphics[width=4cm,height=3.5cm]{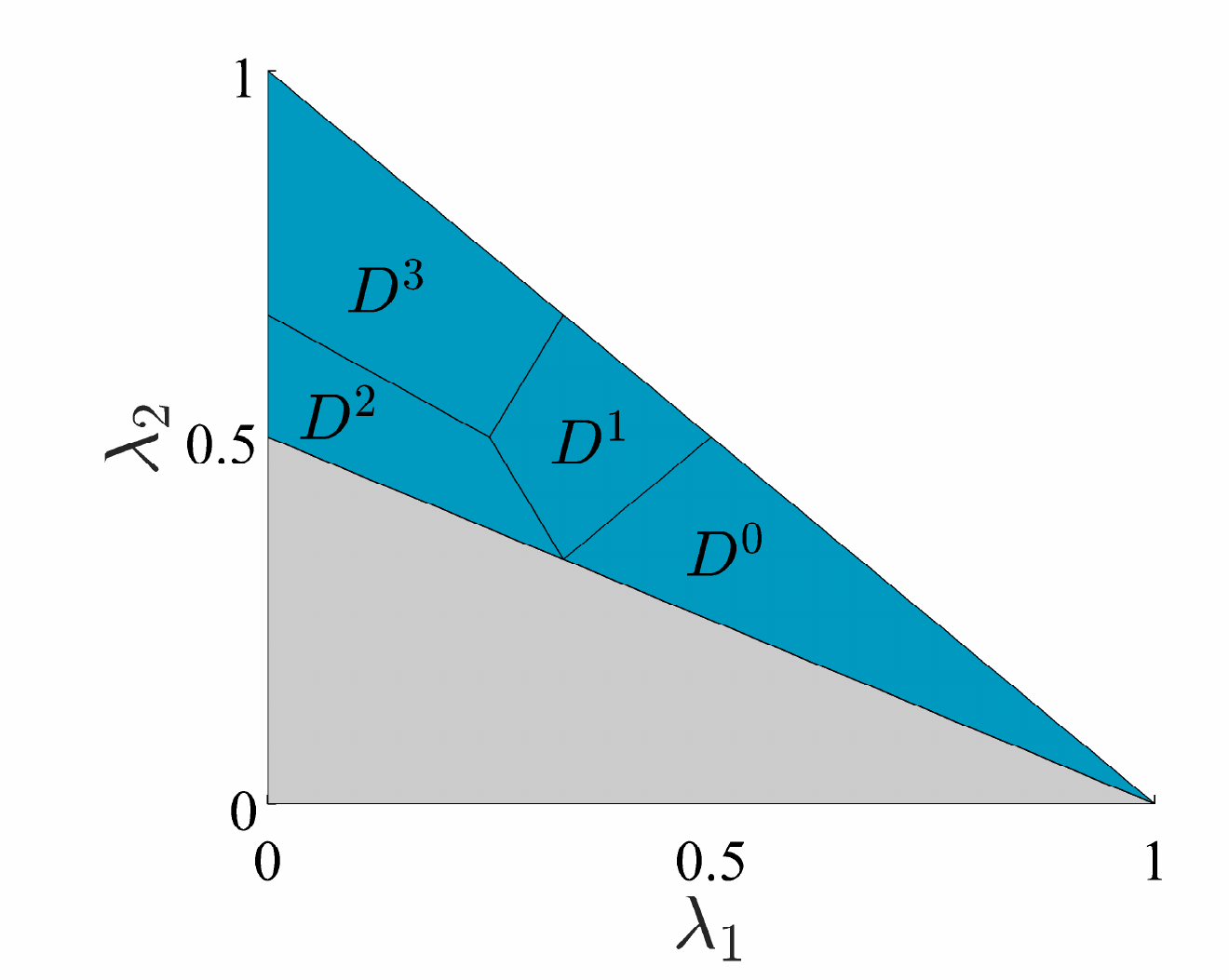}
	
	\caption{Optimality regions after the first four iterations of Example~\ref{ex:1}.}
	\label{figure1}
\end{figure}

The solution to the problem is $(\bar{\mathcal{X}},\bar{\mathcal{X}}^h)$ where 
$\bar{\mathcal{X}} = \{ (5, 0, 0 )^T, (1, 4, 0)^T, (0, 5, 1)^T, (0, 4.5, 0)^T \}$, and $\bar{\mathcal{X}}^h = \left\{ (0,0,1)^T \right\}.$ The lower image can be seen in Figure~\ref{ex1lower}.

\begin{figure}[ht]
\centering
\includegraphics[width=5cm,height=5cm]{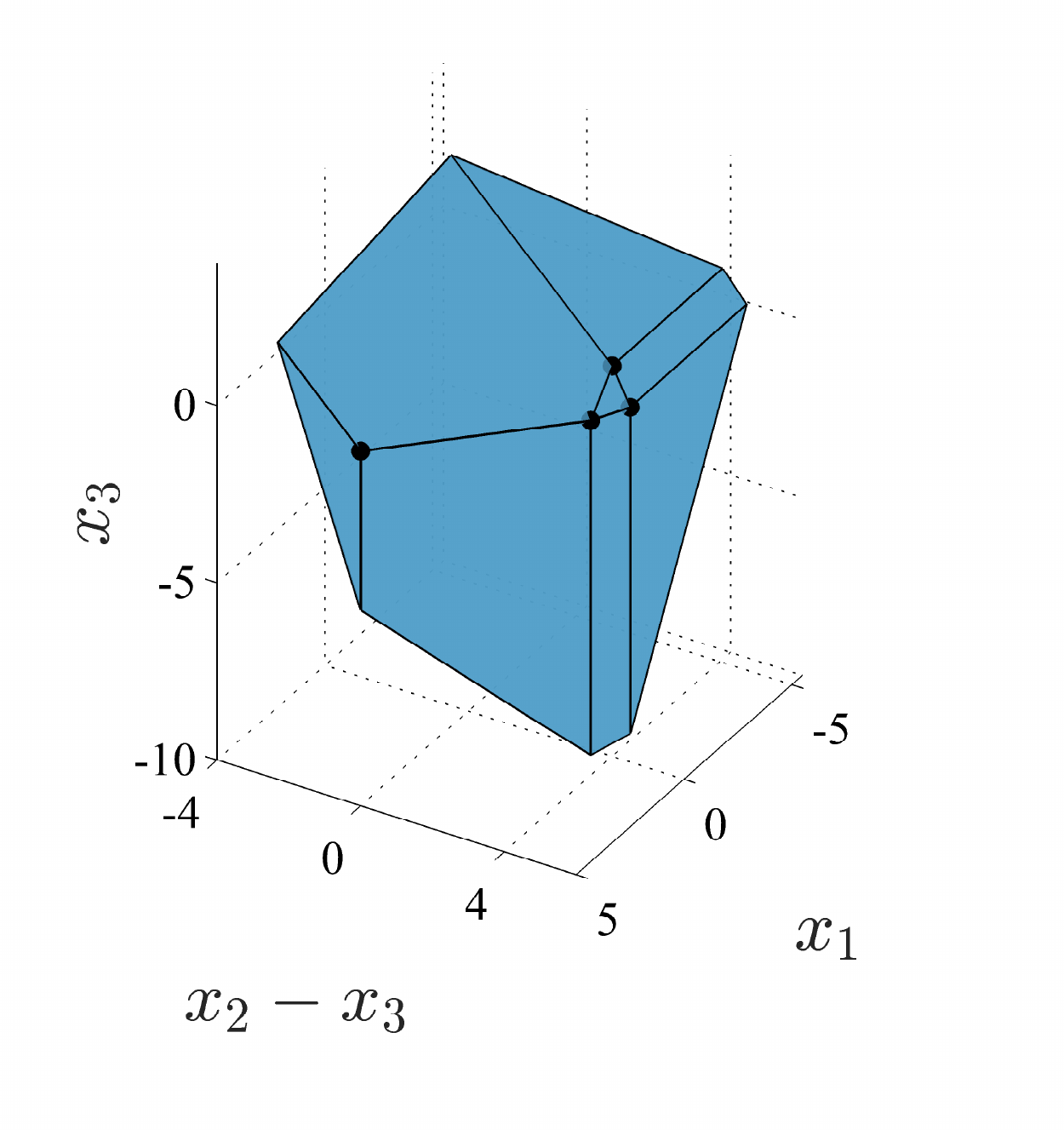}
\caption{Lower image $\mathcal{P}$ of Example~\ref{ex:1}.}
\label{ex1lower}
\end{figure}
\end{example}

\begin{example}
	\label{ex:novertex}
{	Consider the following example.
	\begin{alignat*}1
	\text{maximize}  &\quad (x_1-x_2,x_3 -x_4)^T \text{~~with respect to ~} \leq_{\R^2_+}\\
	\text{subject to}&\quad x_1 - x_2 + x_3 - x_4 \leq 1 \\
	&\quad  x_1, x_2, x_3, x_4 \geq 0.
	\end{alignat*}}

Let $c = (1,1)^T \in \Int \R^2_+$. Clearly, we have $\Lambda = [0,1]\subseteq \R $. {Using the method described in Section~\ref{subsubsect:w0} a.} we find $w^0 = (\frac{1}{2}, \frac{1}{2})$ as the initial scalarization parameter. Then, $x^0 = (1, 0, 0, 0)^T$ is an optimal solution to~P$_1(w^0)$ and {the index set} of the basic variables of $D^0$ is found as $\mathcal{B}^0 = \{1\}$. Algorithm~\ref{alg_1} terminates after two iterations and yields $\bar{\mathcal{X}} = \{(1, 0, 0, 0)^T, (0,0,1,0)^T\}, \bar{\mathcal{X}}^h = \{(1,0,0,1)^T,(0,1,1,0)^T\}$. The lower image can be seen in Figure~\ref{ex3_figure}. Note that as it is possible to generate the lower image only with one point maximizer, the second one is redundant, see Remark~\ref{rem:degeneracy} b.

\begin{figure}[ht]
	\centering
	\includegraphics[width=4cm,height=4cm]{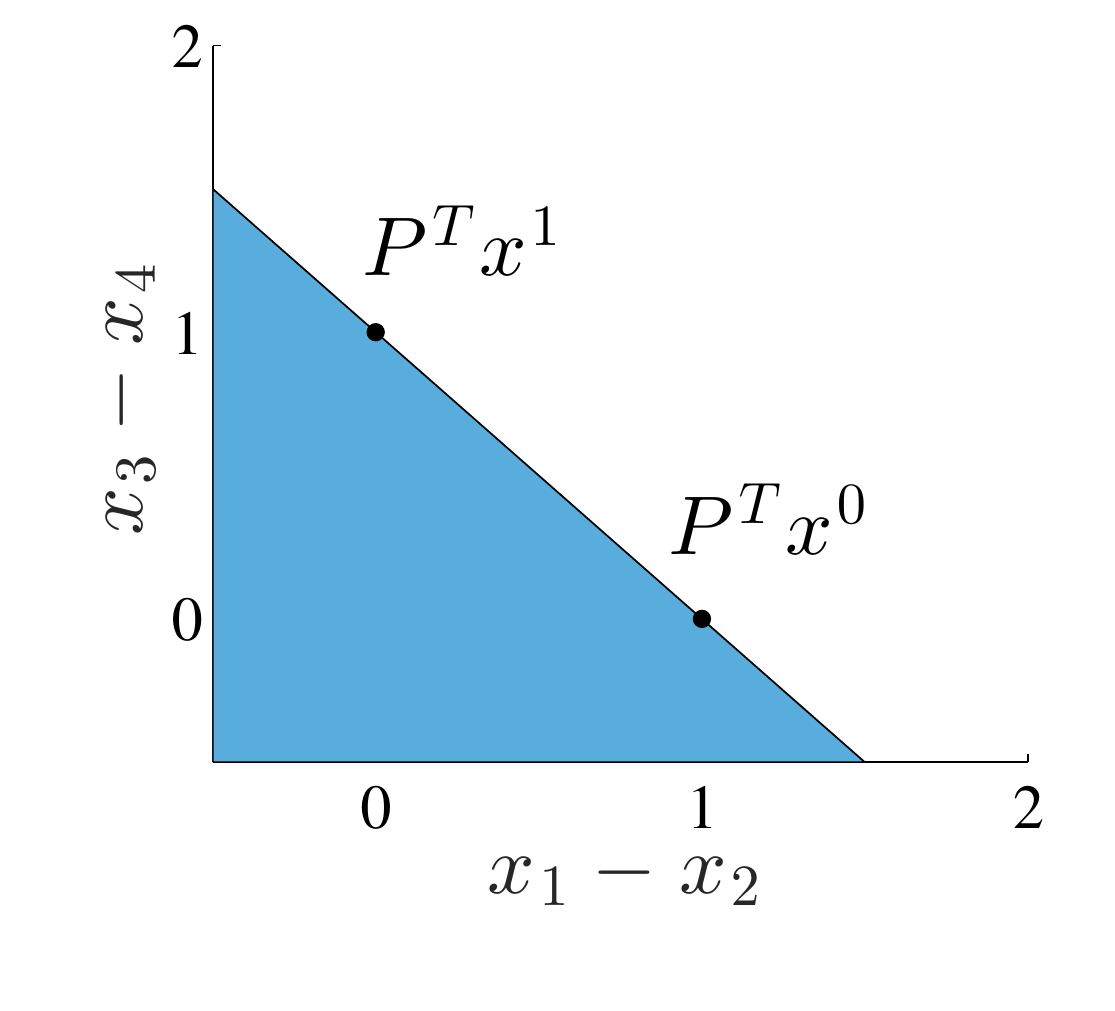}
	\caption{Lower image $\mathcal{P}$ of Example~\ref{ex:novertex}.}
	\label{ex3_figure}
\end{figure}
	
\end{example}

\section{Comparison of different simplex algorithms for LVOP}
\label{sect:comparisons}
As briefly mentioned in Section~\ref{sect:Intro}, there are different simplex algorithms to solve LVOPs. Among them, the Evans-Steuer algorithm~\cite{steuer73} works very similar to the algorithm provided here. It moves from one dictionary to another where each dictionary gives a point maximizer. Moreover, it finds 'unbounded efficient edges', which correspond to the direction maximizers. %Different from Algorithm~\ref{alg_1}, Evans-Steuer algorithm works only for ordering cones being the positive orthant.
Even though the two algorithms work in a similar way, they have some differences that affect the efficiency of the algorithms significantly. The main difference is that the Evans-Steuer algorithm finds the set of all maximizers whereas Algorithm~\ref{alg_1} finds only a subset of maximizers, which generates a solution in the sense of L\"{o}hne~\cite{lohne} and allows to generate the set of all maximal elements of the image of the feasible set. In general, the Evans-Steuer algorithm visits more dictionaries than Algorithm~\ref{alg_1} especially if the problem is degenerate. 

First of all, in each iteration of Algorithm~\ref{alg_1}, for each entering variable $x_j$, only one leaving variable is picked among the set of all possible leaving variables, see line $12$. Differently, the Evans-Steuer algorithm performs pivots $x_j \leftrightarrow x_i$ for all possible leaving variables, $i \in \argmin_{i\in \mathcal{B},\: (B^{-1}N)_{ij} >0} \frac{(B^{-1}b)_i}{(B^{-1}N)_{ij}}$. If the problem is degenerate, this procedure leads the Evans-Steuer algorithm to visit many more dictionaries than Algorithm~\ref{alg_1} does. In general, these additionally visited dictionaries yield maximizers that are already found. {In \cite{armand, armand_malivert}, it has been shown that using the lexicographic rule to choose the leaving variables would be sufficient to cover all the efficient basic solutions. For the numerical tests that we run, see Section~\ref{sect:results}, we have modified the Evans-Steuer algorithm such that it uses the lexicographic rule.}

Another difference between the two simplex algorithms is at the step where the entering variables are selected. In Algorithm~\ref{alg_1}, the entering variables are the ones which correspond to the defining inequalities of the current optimality region. Different methods to find the entering variables are provided in Section~\ref{subsect:JD}. The method that is employed for the numerical tests of Section~\ref{sect:results} involves solving sequential LP's with $q-1$ variables and at most $n+k$ inequality constraints, where $k$ is the number of generating vectors of the ordering cone. Note that the number of constraints are decreasing in each LP as one solves them successively. For each dictionary, the total number of LPs to solve is at most $n$ in each iteration. %It can be strictly less than $n$ if some columns of the matrix $Z_{\mathcal{N}}$ are (positive) multiples of some others. In that case, those can be eliminated before starting solving the LPs. 

The Evans-Steuer algorithm finds a larger set  of entering variables, namely 'efficient nonbasic variables' for each dictionary. In order to find this set, it solves $n$ LPs with $n+q+1$ variables, $q$ equality and $n+q+1$ non-negativity constraints. More specifically, for each nonbasic variable $j \in \mathcal{N}$ it solves
\begin{align*}
\text{~maximize~~~~~~~~} &\mathbf{1}^Tv \\
\text{subject to~~~~~~~~}  & Z_{\mathcal{N}}^Ty - \delta Z_{\mathcal{N}}^T e^j  - v = 0, \\
& y, \delta, v \geq 0,
\end{align*}
where $y \in \R^n, \delta \in \R, v\in \R^q$. Only if this program has an optimal solution $0$, then $x_j$ is an efficient nonbasic variable. This procedure is clearly costlier than the one employed in Algorithm~\ref{alg_1}. In~\cite{isermanntest}, this idea is improved so that it is possible to complete the procedure by solving {fewer} LPs of the same structure. {Further improvements are done also in~\cite{ecker1980}. Moreover, in \cite{armand, armand_malivert} a different method is applied in order to find the efficient nonbasic variables. Accordingly, one needs to solve $n$ LPs with $2q$ variables, $n$ equality and $2q$ nonnegativity constraints. Clearly, this method is more efficient than the one used for the Evans-Steuer algorithm.} However, the general idea of finding the efficient nonbasic variables clearly yields visiting more redundant dictionaries than Algorithm~\ref{alg_1} would visit. Some of these additionally visited dictionaries yield different maximizers that {map} into already found maximal elements in the objective space, see Example~\ref{exampleSteuer}; while some of them yield non-vertex maximal elements in the objective space, see Example~\ref{ex:dualdegenerate}. 

\begin{example} 
\label{exampleSteuer} 
Consider the following simple example taken from~\cite{Steuer_connectedness}, in which it has been used to illustrate the Evans-Steuer algorithm.
\begin{alignat*}1
\text{maximize}  &\quad (3x_1+x_2,3x_1 -x_2)^T \text{~~with respect to ~} \leq_{\R^2_+}\\
\text{subject to}&\quad x_1 + x_2 \leq 4 \\
&\quad x_1 - x_2 \leq 4\\
&\quad \quad \quad ~x_3 \leq 4\\
&\quad  x_1, x_2, x_3 \geq 0.
\end{alignat*}
If one uses Algorithm~\ref{alg_1}, the solution is provided right after the initialization. The initial set of basic variables can be found as $\mathcal{B}^0 = \{1,5,6\}$, and the basic solution corresponding to the initial dictionary is $x^0 = (4,0,0)^T$. One can easily check that $x^0$ is optimal for all $\lambda \in \Lambda$. Thus, Algorithm~\ref{alg_1} stops and returns the single maximizer. 
On the other hand, it is shown in~\cite{Steuer_connectedness} that the Evans-Steuer algorithm terminates only after performing another pivot to obtain a new maximizer $x^1 = (4,0,4)^T$. This is because, from the dictionary with basic variables~$\mathcal{B}^0 = \{1,5,6\}$ it finds $x_3$ as an efficient nonbasic variable and performs one more pivot with entering variable $x_3$. Clearly the image of $x^1$ is again the same vertex $(4,4)^T$ {in the image space}. Thus, in order to generate a solution in the sense of Definition~\ref{def:solution}, the last iteration is unnecessary. 
\end{example}

\begin{example}
\label{ex:dualdegenerate}
Consider the following example.
\begin{alignat*}1
\text{maximize}  &\quad (-x_1-x_3,-x_2-2x_3)^T \text{~~with respect to ~} \leq_{\R^2_+}\\
\text{subject to}&\quad -x_1 - x_2 - 3x_3 \leq -1 \\
&\quad  x_1, x_2, x_3 \geq 0.
\end{alignat*}	
First, we solve the example by Algorithm~\ref{alg_1}. Clearly, $\Lambda = [0,1]\subseteq \R$. We find an initial dictionary $D^0$ with $\mathcal{B}^0 = \{1\}$, which yields the maximizer $x^0 = (1,0,0)^T$. One can easily see that index set of the defining inequalities of the optimality region can be chosen either as $J^{D^0} = \{2\}$ or $J^{D^0} = \{3\}$. Note that Algorithm~\ref{alg_1} picks one of them and {continues} with it. In this example we get $J^{D^0} = \{2\}$, perform the pivot $x_2 \leftrightarrow x_1$ to get $D^1$ with $\mathcal{B}^1 = \{2\}$ and $x^1 = (0,1,0)^T$. From $D^1$, there are two choices of {sets} of entering variables and we set $J^{D^1} = \{1\}$. As the pivot $x_1 \leftrightarrow x_2$ is already explored, the algorithm terminates with $\bar{\mathcal{X}} = \{x^0, x^1\}$ and $\bar{\mathcal{X}}^h = \emptyset$.

When one solves the same problem by the Evans-Steuer algorithm, from $D^0$, both $x_2$ and $x_3$ {are found} as entering variables. When $x_3$ enters from $D^0$, one finds a new maximizer $x^2 = (0,0,\frac{1}{3})^T$. Note that this yields a nonvertex maximal element on the lower image, see Figure~\ref{ex4_figure}.

\begin{figure}[ht]
	\centering
	\includegraphics[width=4cm,height=3.5cm]{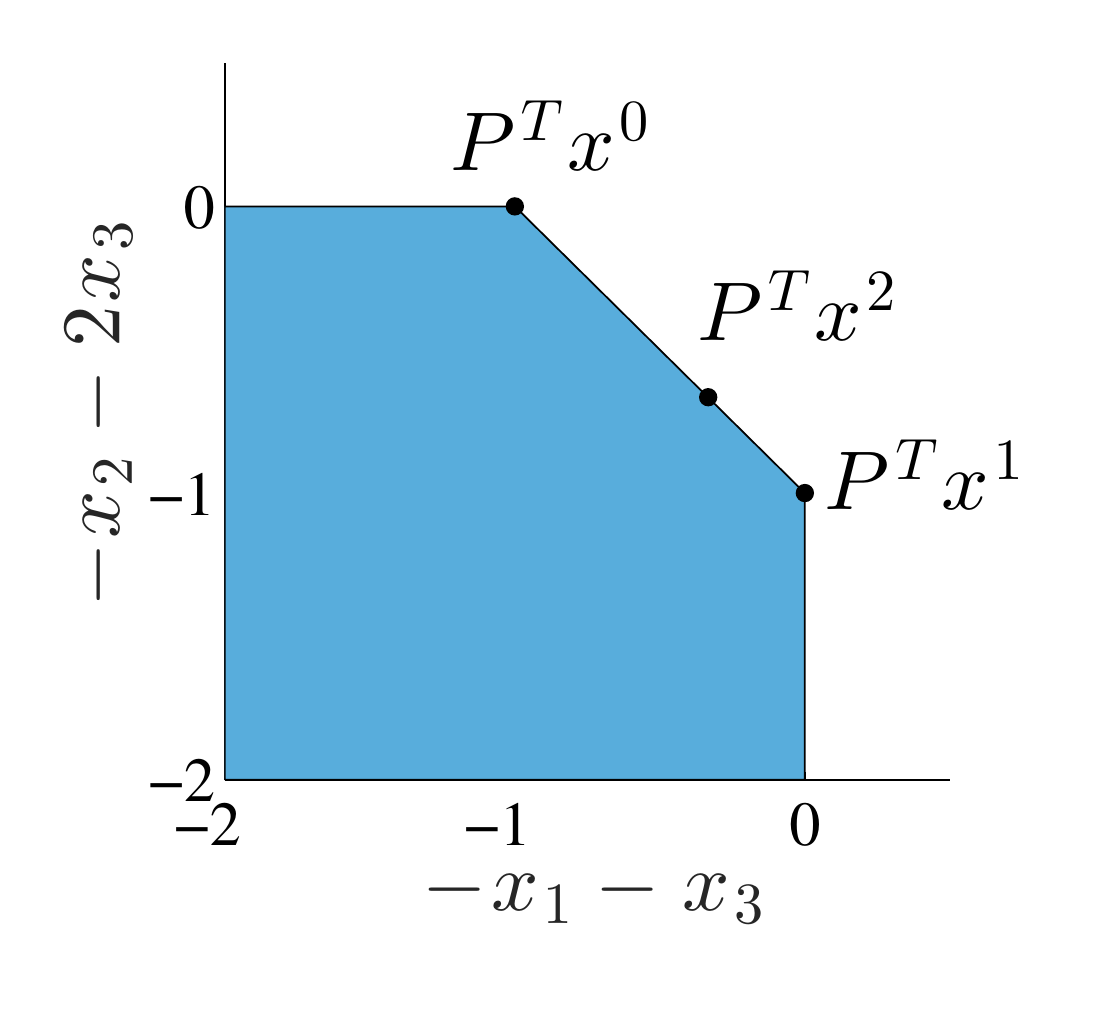}
	\caption{Lower image $\mathcal{P}$ of Example~\ref{ex:dualdegenerate}.}
	\label{ex4_figure}
\end{figure}
\end{example}

\begin{remark}\label{rem:nondegenerate}
{Note that if the problem is primal nondegenerate, then for a given entering variable of a given dictionary, both Algorithm~\ref{alg_1} and the Evans-Steuer algorithm find the unique leaving variable. If in addition, every efficient nonbasic variable of a given dictionary corresponds to a defining inequality of its optimality region, then the entering variables from that dictionary would be the same for both algorithms. Indeed, the different type of redundancies that are explained in Remark~\ref{rem:degeneracy} are mostly observed if there is a primal degeneracy or if there are efficient nonbasic variables which corresponds to redundant inequalities of the optimality region.  Hence, it wouldn't be wrong to state that for 'nondegenerate' problems, the Evans-Steuer algorithm and Algorithm~\ref{alg_1} follow similar paths. But for degenerate problems their performance will be quite different.}
\end{remark}

Apart from the Evans-Steuer algorithm Ehrgott, Puerto and {Rodriguez-Ch\'{i}a} \cite{ehrgott_simplex} developed a primal-dual simplex algorithm to solve LVOPs. The algorithm finds a partition $(\Lambda^d)$ of $\Lambda$. It is similar to Algorithm~\ref{alg_1} in the sense that for each parameter set $\Lambda^d$, it provides an optimal solution $x^d$ to the problems~$(\text{P}_{\lambda})$ for all $\lambda \in \Lambda^d$. The difference between the two algorithms is in the method of finding the partition. The algorithm in~\cite{ehrgott_simplex} starts with a (coarse) partition of the set $\Lambda$. In each iteration it finds a finer partition until no more improvements can be done. {In contrast to the algorithm proposed here, the algorithm in~\cite{ehrgott_simplex} requires solving in each iteration an LP with $n+m$ variables and $l$ constraints where $m < l \leq m+n$, which clearly makes the algorithm computationally much more costly.} In addition to solving one 'large' LP, it involves a {procedure which is similar to finding the defining inequalities of a region given by a set of inequalities.} Also, different from Algorithm~\ref{alg_1}, it finds only a set of weak maximizers so that as a last step one needs to perform a vertex enumeration in order to obtain a solution {consisting} of maximizers only. Finally, the algorithm provided in~\cite{ehrgott_simplex} can deal with unbounded problems only if the set $\Lambda_b$ is provided, which requires a Phase 1 procedure. 

\section{Numerical results}
\label{sect:results}
In this section we provide numerical results to study the efficiency of Algorithm~\ref{alg_1}. We generate random problems, solve them with different algorithms and compare the solutions and the CPU times. Algorithm~\ref{alg_1} is implemented in MATLAB. We also use a MATLAB implementation of Benson's algorithm, namely bensolve 1.2 \cite{bensolve1.2}. The current version of bensolve 1.2 solves two linear programs in each iteration. However, we employ an improved version which solves only one linear program in each iteration, {see~\cite{lvop,bensolve,bensolve_benj}}. For the Evans-Steuer algorithm, instead of using ADBASE~\cite{ADBASE}, we implement the algorithm in MATLAB. This way, we can test the algorithms with the same machinery. This gives the opportunity to compare the CPU times. {For each algorithm the linear programs are solved using the GLPK solver, see~\cite{glpk}.} %{Two different computers were used to run the tests.  The first one was used to run the tests whose results are shown in  Tables~\ref{table1}-\ref{table4} and the second one was used for the results in Table~\ref{table5}.}

{The first set of problems are randomly generated with no special structure. That is to say, these problems are not designed to be degenerate. {In particular, each element of the matrices $A$ and $P$ and the vector $b$ is sampled independently, the elements of $A$ and $P$ from a normal distribution with mean $0$ and variance $100$, and the elements of $b$ from a uniform distribution over $[0,10]$. As $b \geq 0$, we did not employ a Phase 1 algorithm to find a primal feasible initial dictionary.} Table~\ref{table1} shows the numerical results for the randomly generated problems with three objectives. We fix different numbers of variables ($n$) and constraints ($m$) and generate 100 problems for each size. We measure the average time that Algorithm~\ref{alg_1} (avg A), bensolve 1.2. (avg B) {and the Evans-Steuer algorithm (avg E)} take to solve the problems. Moreover, we report the minimum (min A, min B, min E) and maximum (max A, max B, max E) running times for each algorithm among those 100 problems. The number of unbounded problems that are found among the 100 problems is denoted by \#u.}

\begin{table}[ht]
	\caption{Run time statistics for randomly generated problems {where $q=3$. For the first row $n=20, m=40$; for the second row, $n=30, m=30$; for the last row $n=40,m=20$.}}
\centering
\begin{tabular}{ |c|c|c|c|c|c|c|c|c|c| }
\hline\hline
& & & & & & & & &\\ [-0.7ex]
min A & min B & min E & max A & max B & max E & avg A & avg B & avg E & \#u \\ [0.7ex]
\hline\hline 
0.34 & 0.20 & 0.27 & 6.02 & 162.53 & 5.77 & 1.72 & 15.63 & 1.63 & 0 \\
0.08 & 0.08 & 0.09 & 9.36 & 257.98 & 8.61 & 3.15 & 32.41 & 2.90 & 8\\ 
0.05 & 0.03 & 0.08 & 13.52 & 418.44 & 11.81 & 3.33 & 23.92 & 2.96 & 38 \\  
		\hline
	\end{tabular}
	\label{table1}
\end{table}

{Next, we randomly generate problems with four objectives and with different numbers of variables ($n$) and constraints ($m$). For each size we generate four problems. Table~\ref{table2} shows the numerical results, where $\vert\bar{\mathcal{X}}\vert$ and $\vert\bar{\mathcal{X}}^h\vert$ are the number of elements of the set of point and direction maximizers, respectively. {For each problem the time for Algorithm~\ref{alg_1}, for bensolve 1.2 and for the Evans-Steuer algorithm to terminate are shown by 'time A', 'time B' and 'time E', respectively.} 

For these particular examples all algorithms find the same solution $(\bar{\mathcal{X}}, \bar{\mathcal{X}}^h)$. As no structure is imposed on these problems, the probability that these problems are nondegenerate is very high. This explains finding the same solution by all of the algorithms. As seen from the Tables~\ref{table1} and~\ref{table2}, the CPU times of the Evans-Steuer algorithm are very close to the CPU times of Algorithm~\ref{alg_1} which is expected as explained in Remark~\ref{rem:nondegenerate}.

\begin{table}[ht]
	\caption{Computational results for randomly generated problems}
\centering
\begin{tabular}{ |c|c|c|c|c|c|c|c| }
\hline\hline
& & & & & & &\\ [-0.7ex]
$q$ & $n$ & $m$ & $\vert\bar{\mathcal{X}}\vert $ & $\vert\bar{\mathcal{X}}^h\vert$ & time A & time B & time E \\ [0.5ex]
\hline\hline
$4$	& $30$  & $50$  & $267$  & $0$   & $3.91$ & $64.31$ & $3.61$ \\ 
$4$	& $30$  & $50$  & $437$  & $0$   & $6.95$  & $263.39$ & $7.06$ \\ 
$4$	& $30$  & $50$  & $877$  & $0$   & $15.73$ & $1866.1$ & $17.01$ \\ 
$4$	& $30$  & $50$  & $2450$ & $0$   & $74.98$ & $33507$ & $73.69$ \\ 		 \hline
$4$	& $40$  & $40$  & $814$  & $0$   & $20.41$ & $1978.3$ & $18.56$ \\
$4$	& $40$  & $40$  & $1468$ & $81$ & $42.39$  & $11785$ & $38.20$ \\ 
$4$	& $40$  & $40$  & $2740$  & $0$ & $105.45$ & $64302$ & $97.69$ \\
$4$	& $40$  & $40$  & $2871$  & $324$   & $121.16$ & $82142$ & $112.11$ \\ \hline
$4$	& $50$  & $30$  & $399$  & $21$   & $10.53$ & $233.11$ & $9.23$ \\ 	 	 
$4$	& $50$  & $30$  & $424$  & $0$   & $11.22$ & $294.17$ & $9.92$ \\
$4$	& $50$  & $30$  & $920$  & $224$ & $28.08$ & $3434.1$ & $24.05$ \\ 
$4$	& $50$  & $30$  & $1603$  & $176$ & $55.97$ & $14550$ & $49.86$ \\ 	
\hline
\end{tabular}
\label{table2}
\end{table}

{As seen from {Tables~\ref{table1} and~\ref{table2}}, the parametric simplex algorithm works more {efficiently} than bensolve 1.2 for the randomly generated problems. {The main reason for the difference in the performances is that in each iteration, Benson's algorithm solves an LP that is in the same size of the original problem and also a vertex enumeration problem. Note that solving a vertex enumeration problem from scratch in each iteration is a costly procedure. In~\cite{csirmaz,ehr_dual}, an online vertex enumeration method has been proposed and this would increase the efficiency of Benson's algorithm.} 

Note that these randomly generated problems have no special structure and {thus there is a high probability} that these problems are nondegenerate. However, {in general, Benson-type objective space algorithms are expected to be more efficient whenever the problem is degenerate. The main reason is that these algorithms do not need to deal with the different efficient solutions which map into the same point in the objective space. This, indeed, is one of the main motivation of Benson's algorithm for linear multiobjective optimization problems, see~\cite{benson}.} 

{In order to see the efficiency of our algorithm for degenerate problems, we generate random problems which are designed to be degenerate. In the following examples this is done by generating a nonnegative $b$ vector with many zero components and choosing objective functions with the potential to create optimality regions with empty interior within $\Lambda$. In particular, for the three-objective examples, we generate the first objective function randomly, take the second one to be the negative of the first objective function, and let the third objective consist of only one nonzero entry. For the four-objective examples, the first three objectives are created as described above and the fourth one is generated randomly in a way that at least half of its components are zero.} {The number of nonzero elements in $b$ and in the last objective function of the four-objective problems are sampled independently from uniform distributions over the integers in the intervals $[0,{\lfloor\frac{m}{2}\rfloor}]$ and  $[0,{\lfloor\frac{q}{2}\rfloor}]$, respectively. Each element of the first column and each possibly nonzero element of the third and the fourth column of $P$ as well as each element of $A$ and each possibly nonzero element of $b$ is sampled independently, in the same way as for the nondegenerate problems.}

{First, we consider three objective functions where we fix different {numbers} of variables ($n$) and constraints ($m$). We generate 20 problems for each size. We measure the average time that Algorithm~\ref{alg_1} (avg A), bensolve 1.2. (avg B) and the Evans-Steuer algorithm (avg E) take to solve the problems. We {also} report the minimum (min A, min B, min E) and maximum (max A, max B, max E) running times for each algorithm among those 20 problems. The times are measured in seconds. The results are given in Table~\ref{table3}.}

\begin{table}[ht]
	\caption{{Run time statistics for randomly generated degenerate problems where $q=3$.} For the first row $n = 5, m = 15$; for the second row $n = m = 10$; and for the last row $n = 15, m=5$.}
	\centering
	\begin{tabular}{ |c|c|c|c|c|c|c|c|c|}
		\hline\hline
		& & & & & & & & \\ [-0.7ex]
		min A & min B & min E & max A & max B & max E & avg A & avg B & avg E\\ [0.7ex]
		\hline\hline 
		0.02 & 0.01 & 0.03 & 0.14 & 0.09 & 140.69 & 0.07 & 0.04 & 8.85  \\
		0.03 & 0.02 & 0.08 & 1.33 & 0.14 & 4194.1 & 0.25 & 0.04 & 227.02 \\ 
		0.05 & 0.01 & 0.09 & 1.47 & 0.20 & 1893.0 & 0.05 & 0.25 & 190.25 \\  
		\hline
	\end{tabular}
	\label{table3}
\end{table}

{In order to give an idea how the solutions provided by the three algorithms differ {for these degenerate problems}, in Table~\ref{table4} we provide detailed results for single problems. Among the 20 problems that are generated to obtain each row of Table~\ref{table3}, we select the two problems with the CPU times 'max A' and 'max E' and provide the following for them. $\vert\bar{\mathcal{X}}_{(\cdot)}\vert$ and $\vert\bar{\mathcal{X}}_{(\cdot)}^h\vert$ denote the number of elements of the set of point and direction maximizers that are found by each algorithm, respectively. $\vert VS_A \vert$ and $\vert VS_E\vert$ are the number of dictionaries that Algorithm~\ref{alg_1} and the Evans-Steuer algorithm visit until termination. For each problem the time for Algorithm~\ref{alg_1}, bensolve 1.2, and the Evans Steuer algorithm to terminate are shown by 'time A', 'time B' and 'time E', respectively.}

\begin{table}[ht]
	\caption{{Computational results for single problems that require CPU times max A and max E among the ones that are generated for Table~\ref{table3}. For the first set of problems $n = 5, m = 15$; for the second set of problems $n = m = 10$ (max A and max E yielded the same problem here); and for the last set of problems $n = 15, m = 5.$}}
	\centering
	\begin{tabular}{ |c|c|c|c|c|c|c|c|c|c|c| }
		\hline\hline
		& & & & & & & & & & \\ [-0.7ex]
		$\vert VS_A\vert$ & $\vert VS_E\vert$ & $\vert\bar{\mathcal{X}}_A\vert$ & $\vert\bar{\mathcal{X}}_B\vert$ & $\vert\bar{\mathcal{X}}_E\vert$ & $\vert\bar{\mathcal{X}}^h_A\vert$ & $\vert\bar{\mathcal{X}}^h_B\vert$ & $\vert\bar{\mathcal{X}}^h_E\vert$& time A & time B & time E \\ [0.5ex]
		\hline\hline
		20 & 5617 & 1 & 1 & 1 & 0 & 0 & 0 & 0.13 & 0.03 & 140.69 \\ 
		30 & 361 & 3 & 3 & 3 & 0 & 0 & 0 & 0.14 & 0.06 & 6.61 \\ 
		\hline\hline
		 324 & 22871 & 1 & 1 & 4 & 0 & 0 & 1 & 1.33 & 0.03 & 4194.1 \\ 
%		461 & 5009 & 82 & 3 & 180 & 0 & 0 & 0 & 1.16 & 0.03 & 249.02 \\ 
		\hline\hline
		14 & 11625 & 1 & 1 & 1 & 0 & 0 & 0 & 0.05 & 0.03 & 1893.0 \\  
		452 & 11550 & 1 & 1 & 1 & 39 & 2 & 4707 & 1.47 & 0.05 & 1598.1 \\ 
		\hline
	\end{tabular}
	\label{table4}
\end{table}

{Finally, we compare Algorithm~\ref{alg_1} and bensolve 1.2 to {get statistical} results regarding their efficiencies for degenerate problems. Note that this test was done on a different computer than the previous tests. Table~\ref{table5} shows the numerical results for the randomly generated degenerate problems with $q=4$ objectives, $m$ constraints and $n$ variables. We generate 100 problems for each size. We measure the average time that Algorithm~\ref{alg_1} (avg A) and bensolve 1.2. (avg B) take to solve the problems. The minimum (min A, min B) and maximum (max A, max B) running times for each algorithm among those 100 problems are also provided.}

\begin{table}[ht]
	\caption{Run time statistics for randomly generated degenerate problems.}
	\centering
	\begin{tabular}{ |c|c|c|c|c|c|c|c|c| }
		\hline\hline
		& & & & & & & & \\ [-0.7ex]
		$q$ & $n$ & $m$ & min A & min B & max A & max B & avg A & avg B\\ [0.7ex]
		\hline\hline 
		4	& 10  & 30  &  0.05 & 0.03 & 177.47 & 4.07 & 8.49 & 0.21    \\
		4	& 20  & 20  & 0.21 & 0.02 & 973.53 & 199.77 & 19.89 & 12.05 \\ 
		4	& 30  & 10  & 0.11  & 0.03 & 2710.20 & 13.70 & 37.68 & 0.73 \\  
		\hline
	\end{tabular}
	\label{table5}
\end{table}

%In order to see the difference between the solution sets where the problems are 'highly degenerate', we select the problems for which the CPU times for Algorithm~\ref{alg_1} i 
%\begin{table}[ht]
%	\caption{Computational data for degenerate problems.}
%	\centering
%	\begin{tabular}{ |c|c|c|c|c|c|c|c|c|c|c|}
%		\hline\hline
%		& & & & & & & & & & \\ [-0.7ex]
%		q & n & m &$\vert VS_A\vert$ & $\vert\bar{\mathcal{X}}_A\vert$ & $\vert\bar{\mathcal{X}}_B\vert$ &  $\vert\bar{\mathcal{X}}^h_A\vert$ & $\vert\bar{\mathcal{X}}^h_B\vert$ & 
%		time A & time B & time E \\ [0.5ex]
%		\hline\hline
%		4 & 10 & 30 & & & & & & & &\\ 
%		4 & 10 & 30 & & & & & & & & \\ 		 
%		\hline\hline
%		4 & 20 & 20 & & & & & & & & \\ 
%		4 & 20 & 20 & & & & & & & & \\ 
%		\hline\hline
%		4 & 30 & 10 & & & & & & & & \\  
%	  	4 & 30 & 10 & & & & & & & & \\ 
%		\hline
%	\end{tabular}
%	\label{table6}
%\end{table}

Clearly, for degenerate problems bensolve 1.2 is more efficient than the simplex-type algorithms considered here, {namely Algorithm~\ref{alg_1} and the Evans-Steuer algorithm.} However, {the design of} Algorithm~\ref{alg_1} results in a significant decrease in CPU time compared to the Evans-Steuer algorithm {in its improved form of~\cite{armand, armand_malivert}.}

\section*{Acknowledgments}

We would like to thank Andreas L\"{o}hne, Friedrich-Schiller-Universit\"{a}t Jena, for helpful remarks that greatly improved the manuscript, and Ralph E. Steuer, University of Georgia, for providing us the ADBASE implementation of the algorithm from~\cite{steuer73}.

Vanderbei's research was supported by the Office of Naval Research under Award Number N000141310093 and N000141612162.

\bibliographystyle{spmpsci}
\bibliography{LVOPalgorithmrevised}

\end{document}